\title[Fixed points and cycle structure of random permutations]{Fixed points and cycle structure of random permutations}%
\author{Sumit Mukherjee}
\address{Department of Statistics, Columbia University, New York, USA, {\tt sm3949@columbia.edu}}
\begin{document}


\subjclass[2010]{05A05, 60C05,  60F05}
\keywords{Combinatorial probability,  Mallows model, Permutation Limit, Fixed Points, Cycle structure}

\maketitle









\begin{abstract}
Using the recently developed notion of permutation limits
this paper derives the limiting distribution of the number of fixed
points and cycle structure for any convergent sequence of random
permutations, under mild regularity conditions. In particular this
covers random permutations generated from Mallows Model with Kendall's
Tau, $\mu$ random permutations introduced in \cite{HKMRS}, as well as a
class of exponential families introduced in \cite{Mukherjee}.
\end{abstract}


%

\newcommand{\norm}[1]{\left\| #1 \right\|}


\newcommand{\ABS}[1]{\left(#1\right)} 
\newcommand{\veps}{\varepsilon} 




\section{Introduction}

Study of random permutations is an area of classical interest in the
intersection of Combinatorics and Probability theory. Permutation
statistics of interest is indeed a long list which includes number of
fixed points, cycle structure, length of longest increasing
sub-sequence, number of descents, number of cycles, number of
inversions, order of a permutation, etc. Most of this literature
focuses on the case where the permutation $\pi_n$ is chosen uniformly
at random from $S_n$. For example it is well known that the number of
fixed points of a uniformly random permutation converges to $Poi(1)$ in
distribution. More generally, denoting the number of cycles of length $l$ by
$C_n(l)$, we have
\[
\{C_n(1),\cdots,C_n(l)\}\stackrel{d}{\rightarrow}\{
Poi(1),Poi(1/2),\cdots,Poi(1/l)\},
\]
where the limiting Poisson variables are mutually independent. However,
not much is known in this regard outside the realm of the uniform
measure. Possibly the most widely studied non uniform probability
measure on $S_n$ is the Mallows model with Kendall's Tau, first
introduced by Mallows in \cite{Mallows}, which has a p.m.f. of the form
\begin{align}\label{eq:tau}
M_{n,q}(\pi)=\frac{1}{Z_{n,q}}q^{Inv(\pi_n)}.
\end{align}
Here $Inv(\pi_n):=\sum_{1\le i<j\le n}1\{(i-j)(\pi_n(i)-\pi_n(j))<0\}$
is the number of inversions in $\pi_n$, and $q>0$ is a scalar
parameter. In this case an exact formula is known for the normalizing
constant $Z_{n,q}$, and expectation and variance formulas for $Inv(\pi
_n)$ are easy to derive (see for e.g. \cite{Diaconis-Ram}). In \cite
{BDF} Borodin et al.~asked the question of behavior of permutation
statistics such as cycle structure and longest increasing sub-sequence
for general class of Mallows models which includes the Mallows model
with Kendall's Tau. This question was partially answered by
Mueller-Starr in \cite{MS}, where they derived the weak law of the
length of the longest increasing sub-sequence. Specifically, for the
scaling $n(1-q(n))\rightarrow\beta$ they showed that
\[
\frac{1}{\sqrt{n}}LIS(\pi_n)\stackrel{p}{\rightarrow}\cL(\beta),
\]
where $\cL(\beta):=2\beta^{-1/2}\sinh^{-1}(\sqrt{e^{\beta}-1})$ for
$\beta>0$, and $2|\beta|^{-1/2}\sin^{-1}(\sqrt{1-e^\beta})$ for $\beta
<0$. For the scaling when $n(1-q(n))\rightarrow\infty
$, it was shown by Bhatnagar-Peled (\cite{BP}) that
\[
\frac{1}{n\sqrt{1-q(n)}}LIS(\pi_n)\stackrel{p}{\rightarrow}1.
\]
The more recent work of Basu-Bhatnagar (\cite{BB}) consider the case
$q(n)=q\ne1$ is fixed, and prove a weak law for $LIS(\pi_n)$ (they
also derive a central limit theorem for $q<1$). This answers the
question of LIS for Mallows model with Kendall's Tau for all parameter
scalings, at least at the level of weak limits. On the other hand, the
question of the cycle structure still remains largely unanswered. See
however the recent work of Gladkich-Peled, who derive the order of
expected number of cycles in a Mallows random permutation in \cite
[Theorem 1.1]{GP}, when the underlying parameter $q(n)\in(0,1)$ is arbitrary.

In a different direction, in \cite{HKMRS} the authors Hoppen et
al.~proposed a framework where a permutation can be viewed as a
measure. This is described below in brief:

For a permutation $\pi_n\in S_n$ define the measure $\nu_\pi$ on
$[0,1]^2$ as
\[
\nu_{\pi_n}:=\frac{1}{n}\sum_{i=1}^n\delta_{(i/n,\pi_n(i)/n)}.
\]
A sequence of permutations $\{\pi_n\}_{n\ge1}$ with $\pi_n\in S_n$ is
said to converge to a measure $\mu$, if the sequence of probability
measures $\nu_{\pi_n}$ converge weakly to $\mu$. Any such limit is in
$\cM$, the set of probability distribution on the unit square with
uniform marginals.
Any $\mu\in\cM$ is called a permuton (following \cite{HKMRS}), and it
is shown in \cite[Theorem 1.6]{HKMRS} that any $\mu\in\cM$ can indeed
arise as a limit of a sequence of permutations in this manner. See \cite
{Bhattacharya-M,HKMRS} for a more detailed introduction to permutation limits.

If $\{\pi_n\}_{n\ge1}$ is a sequence of random permutations (not
necessarily in the same probability space), the sequence is said to
converge to a deterministic measure $\mu\in\cM$ in probability, if the
sequence of measures $\nu_{\pi_n}$ converge weakly to the measure $\mu$
in probability. Equivalently, for any continuous function $f$ on the
unit square, one has
\[
\lim_{n\rightarrow\infty}\frac{1}{n}\sum_{i=1}^nf\Big(\frac{i}{n},\frac
{\pi_n(i)}{n}\Big)\stackrel{p}{\rightarrow}\int_{[0,1]^2}f(x,y)d\mu.
\]
Using the topology of permutation limits in \cite{Mukherjee} the author
gave a new proof for a large deviation principle (originally proved in
\cite{J}), and used it to analyze a class of exponential families on
the space of permutations. The large deviation principle was re-derived
in \cite{KKRW}, where Kenyon et al.~study permutation ensembles
constrained to have fixed densities of finite number of patterns.


It was shown by Starr in \cite{Starr} that if $\pi_n$ is generated from
a Mallows model with Kendall's Tau with parameter $q(n)$ such that
$n(1-q(n))\rightarrow\beta$, then the sequence of measures $\nu_{\pi
_n}$ converge weakly in probability to a measure ${\mu_{\rho}}_\beta\in
\cM$ induced by the density
\begin{align}\label{eq:mallows_density}
\rho_\beta(x,y):=\frac{(\beta/2)\sinh(\beta/2)}{e^{\beta/4}\cosh(\beta
(x-y)/2)-e^{-\beta/4}\cosh(\beta(x+y-1)/2)},
\end{align}
which is the Frank's Copula (see \cite{Nelson}).
Since $\pi_n$ converges weakly to the measure ${\mu_{\rho}}_\beta$, in
an attempt to understand the marginal distribution of $\pi_n(i)$ one
might conjecture that $\P_n(\pi_n(i)=j)\approx\frac{1}{n} \rho_\beta(i/n,j/n)$.
We will show that this is indeed true, under certain regularity of the
law of the random permutations. We start by introducing some notations.

\begin{defn}
For $l\in[n]:=\{1,2,\cdots,n\}$ let
\[
\cS(n,l):=\{{\bf p}:=(p_1,p_2,\cdots,p_l)\in[n]^l: p_a\ne p_b \text{
for all }a\ne b, a,b\in[l]\}.
\]
Then we have $|\cS(n,l)|={n\choose l}l!$.
For ${\bf p},{\bf q}\in\cS(n,l)$ let $||{\bf p}-{\bf q}||_\infty:=\max
_{a\in[l]}|p_a-q_a|$. Also for ${\bf p}\in\cS(n,l)$ let $\pi_n({\bf
p})$ denote the vector $(\pi_n(p_1),\cdots,\pi_n(p_k))$.
\end{defn}

For every $n\ge1$ let $\pi_n$ be a random permutation on $S_n$ with
law $\P_n$.
In \cite[Def 6.2]{Bhattacharya-M} the authors define a notion of
equi-continuity of random permutations, which they show is implied by
the condition
\begin{align}\label{eq:equi_old}
\lim_{\delta\rightarrow0}\lim_{n\rightarrow\infty}\sup_{{\bf p},{\bf
q},{\bf r}\in\cS(n,l):||{\bf p}-{\bf r}||_\infty\le n\delta}\Big|\frac
{\P_n(\pi_n({\bf p})={\bf q})}{\P_n(\pi_n({\bf r})={\bf q})}-1\Big|=0
\end{align}
(see \cite[Prop 6.2]{Bhattacharya-M}).
In particular for $l=1$ condition \eqref{eq:equi_old} in spirit demands
that the function $\P_n(\pi_n(p)=q)$ is equi-continuous in $p$. In this
paper we will need an extra notion of equi-continuity which demands
that the function $\P_n(\pi_n(p)=q)$ is jointly equi-continuous in
$p,q$. This is stated below:

\begin{defn}
A sequence of random permutations $\pi_n$ is said to be equi-continuous
in both co-ordinates if
\begin{align}\label{eq:equi_new}
\lim_{\delta\rightarrow0}\lim_{n\rightarrow\infty}\sup_{{\bf p},{\bf
q},{\bf r},{\bf s}\in\cS(n,l):||{\bf p}-{\bf r}||_\infty\le n\delta
,||{\bf q}-{\bf s}||\le n\delta}\Big|\frac{\P_n(\pi_n({\bf p})={\bf
q})}{\P_n(\pi_n({\bf r})={\bf s})}-1\Big|=0.
\end{align}
\end{defn}

\begin{defn} Let $\cC$ denote the set of all strictly positive
continuous functions $\rho$ on $[0,1]^2$ with uniform marginals, i.e.
\[
\int_0^1 \rho(x,y)dx=\int_0^1 \rho(x,y)dy=1.
\]
Denote by $\mu_\rho\in\cM$ the measure induced by $\rho$.
\end{defn}

Our first theorem now proves an estimate of $\P_n(\pi_n({\bf p})={\bf
q})$ for vectors ${\bf p},{\bf q}$ if $\pi_n$ is equi-continuous in
both co-ordinates, and converges in the sense of permutation limits to
$\mu_\rho$.
\begin{thm}\label{thm:density_est}
Suppose $\{\pi_n\}_{n\ge1}$ is a sequence of random permutations with
$\pi_n\in S_n$, such that the sequence is equi-continuous in both
co-ordinates, i.e. it satisfies \eqref{eq:equi_new}. If $\{\pi_n\}
_{n\ge1}$ converges to $\mu_\rho$ for some $\rho\in\cC$, we have
\begin{align}\label{eq:density_est}
\lim_{n\rightarrow\infty}\sup_{{\bf p},{\bf q}\in\cS(n,l)}\Big|\frac
{n^l\P_n(\pi_n({\bf p})={\bf q})}{\prod_{a=1}^l \rho\Big(\frac
{p_a}{n},\frac{q_a}{n}\Big)}-1\Big|=0.
\end{align}
\end{thm}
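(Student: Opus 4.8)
The plan is to prove the estimate \eqref{eq:density_est} by a two–step argument: first establish it for $l=1$, then bootstrap to general $l$ using the equi-continuity hypothesis \eqref{eq:equi_new} together with the $l=1$ case applied coordinate by coordinate. The engine throughout will be an averaging (or ``smoothing'') argument that converts the pointwise quantity $\P_n(\pi_n(p)=q)$ into an average over a small block, which can then be compared to the integral of $\rho$ against a test function — and the latter is controlled precisely because $\pi_n \to \mu_\rho$ in the sense of permutation limits.

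For $l=1$, fix $\delta>0$ and consider a point $(p,q)$ with $p/n,q/n$ in the interior; take the block $B = \{r : |r-p|\le n\delta\}\times\{s : |s-q|\le n\delta\}$. On one hand, the equi-continuity assumption \eqref{eq:equi_new} says that all the values $\P_n(\pi_n(r)=s)$ with $(r,s)\in B$ agree with $\P_n(\pi_n(p)=q)$ up to a factor $1+o_\delta(1)$, uniformly in $n$. On the other hand, $\sum_{(r,s)\in B}\P_n(\pi_n(r)=s)$ is (up to boundary corrections) $n$ times the $\nu_{\pi_n}$–measure of the square $[p/n-\delta,p/n+\delta]\times[q/n-\delta,q/n+\delta]$, because $\nu_{\pi_n}$ puts mass $1/n$ at each $(i/n,\pi_n(i)/n)$ and $\E\,\nu_{\pi_n}(A) = \frac1n\sum_{i}\P_n(\pi_n(i)/n\in A_i\text{-section})$. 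Approximating the indicator of the square by continuous functions from above and below, convergence $\nu_{\pi_n}\to\mu_\rho$ gives $\E\,\nu_{\pi_n}(\text{square}) \to \mu_\rho(\text{square}) = \int\!\!\int_{\text{square}}\rho$, which is $(2\delta)^2\rho(p/n,q/n)(1+o_\delta(1))$ by continuity of $\rho$. Here one must be slightly careful: convergence of $\nu_{\pi_n}$ is assumed, but we want convergence of the mean measures $\E\,\nu_{\pi_n}$; since $\nu_{\pi_n}$ are probability measures on the compact square, weak convergence in probability upgrades to convergence of expectations of bounded continuous functionals by dominated convergence, so this is fine. Combining the two halves: $(2n\delta)^2\,\P_n(\pi_n(p)=q)\,(1+o_\delta(1)) = (2n\delta)^2\,\tfrac1n\cdot\tfrac1{2n\delta}\cdots$ — carefully tracking the counting — yields $n\,\P_n(\pi_n(p)=q) = \rho(p/n,q/n)(1+o_\delta(1))$, and letting $\delta\to 0$ after $n\to\infty$ closes the $l=1$ case. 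Uniformity over $(p,q)$ follows because $\rho$ is uniformly continuous and bounded away from $0$ on the compact square, and the $o_\delta(1)$ terms from \eqref{eq:equi_new} are uniform by hypothesis; the boundary region $\{p/n \text{ or } q/n \text{ within } \delta \text{ of } \{0,1\}\}$ is handled by choosing $\delta$ small and noting this region contributes negligibly, or by a separate direct bound.

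For general $l$, the strategy is induction. Write $\P_n(\pi_n({\bf p})={\bf q}) = \P_n(\pi_n(p_1)=q_1)\cdot\P_n(\pi_n(p_2,\dots,p_l)=(q_2,\dots,q_l)\mid \pi_n(p_1)=q_1)$. The conditional law of $\pi_n$ restricted to the remaining coordinates is (essentially) a random permutation on the remaining $n-1$ symbols; the point is that conditioning on one value $\pi_n(p_1)=q_1$ perturbs the relevant probabilities by a factor $1+o(1)$ uniformly, which is exactly the content one extracts from \eqref{eq:equi_new} (shifting indices by a bounded amount changes $\|{\bf p}-{\bf r}\|_\infty$ by $O(1)\ll n\delta$). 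Iterating $l$ times — $l$ fixed, so only finitely many $o(1)$ factors accumulate — gives $n^l\,\P_n(\pi_n({\bf p})={\bf q}) = \prod_{a=1}^l \rho(p_a/n,q_a/n)\cdot(1+o(1))$.

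The main obstacle I anticipate is the induction step for $l\ge 2$: making rigorous the claim that conditioning on $\pi_n(p_1)=q_1$ (equivalently, deleting row $p_1$ and column $q_1$ and renormalizing to a permutation of size $n-1$) leaves the relevant marginal probabilities essentially unchanged, and that the resulting conditioned ensemble still ``converges to $\rho$'' in the precise sense needed — the limiting density for the size-$(n-1)$ system should again be $\rho$ up to $o(1)$, since removing one point does not move the empirical measure. One clean way to handle this is to avoid conditioning altogether and instead run the $l=1$ block-averaging argument directly in $l$ dimensions: average $\P_n(\pi_n({\bf r})={\bf s})$ over ${\bf r}$ in a product of $l$ blocks around ${\bf p}$ and ${\bf s}$ in a product of $l$ blocks around ${\bf q}$, use \eqref{eq:equi_new} for the ``all values are comparable'' half, and for the ``sum equals an integral'' half use that $\frac1{n^l}\sum_{i_1,\dots,i_l \text{ distinct}} f(i_1/n,\pi_n(i_1)/n,\dots,i_l/n,\pi_n(i_l)/n) \to \int \prod_a d\mu_\rho$ for continuous $f$ — which is the standard fact that convergence of $\nu_{\pi_n}$ to a product-independent-looking limit on the diagonal-excised region follows from convergence of $\nu_{\pi_n}$ itself (the distinctness constraint costs only $O(1/n)$). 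This sidesteps the conditioning subtlety entirely and keeps the whole proof within the single mechanism of comparing empirical-measure integrals to $\rho$-integrals via the definition of permutation convergence.
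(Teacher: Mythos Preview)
Your second approach---running the block-averaging argument directly in $l$ dimensions---is correct and is precisely what the paper does: it sums $\P_n(\pi_n({\bf r})={\bf s})$ over ${\bf r},{\bf s}$ ranging in a product of small blocks, uses \eqref{eq:equi_new} to say all such terms are comparable to $\P_n(\pi_n({\bf p})={\bf q})$, and identifies the resulting sum with $n^l$ times $\E\prod_{a=1}^l\nu_{\pi_n}[I_{i_a}\times I_{j_a}]$, which converges to the corresponding product of $\mu_\rho$-masses by weak convergence (plus dominated convergence to pass to expectations, exactly as you note). The only cosmetic difference is that the paper uses a fixed partition of $(0,1]$ into $k$ equal pieces rather than a moving $\delta$-window around $(p/n,q/n)$; this sidesteps your boundary corrections entirely and makes the uniformity over ${\bf p},{\bf q}$ automatic.

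Your initial induction-via-conditioning route does have exactly the gap you flagged: the hypothesis \eqref{eq:equi_new} is stated for the unconditioned laws $\P_n$, and nothing in the assumptions lets you transfer either equi-continuity or the convergence $\nu_{\pi_n}\to\mu_\rho$ to the conditioned law $\P_n(\,\cdot\mid\pi_n(p_1)=q_1)$ viewed as a random permutation on $n-1$ symbols. So you are right to abandon that route in favor of the direct $l$-dimensional argument, and once you do, your proof is essentially the paper's.
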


As an immediate corollary of Theorem \ref{thm:density_est} we obtain limiting distribution of the vector $\pi_n({\bf p})$. A more general
version of this corollary was already derived in \cite[Proposition
6.1]{Bhattacharya-M}.

\begin{cor}\label{cor:easy}
Suppose ${\bf p}_n\in\cS(n,l)$ is such that
\[
\lim_{n\rightarrow\infty}\frac{1}{n}{\bf p}_n={\bf x}\in[0,1]^l.
\]
If $\{\pi_n\}_{n\ge1}$ is a sequence of random permutations with $\pi_n\in S_n$ which satisfies \eqref{eq:density_est} for some $\rho
\in\cC$, then
\[
\frac{1}{n}\pi_n({\bf p_n})\stackrel{d}{\rightarrow}\{Y(x_1),\cdots
,Y(x_l)\},
\]
where $\{Y(x_a)\}_{a=1}^l$ are mutually independent with $Y(x_a)$
having the density $\rho(x_a,.)$.
\end{cor}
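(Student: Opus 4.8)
The plan is to verify convergence in distribution by checking the multivariate distribution function. Since each $Y(x_a)$ has the continuous density $\rho(x_a,\cdot)$, the law of $(Y(x_1),\dots,Y(x_l))$ has a continuous distribution function on $[0,1]^l$, so it is enough to show that for every $\mathbf{z}=(z_1,\dots,z_l)\in[0,1]^l$,
\[
\P_n\Big(\frac{\pi_n(p_{n,1})}{n}\le z_1,\dots,\frac{\pi_n(p_{n,l})}{n}\le z_l\Big)\longrightarrow \prod_{a=1}^l\int_0^{z_a}\rho(x_a,t)\,dt,
\]
where $p_{n,a}$ denotes the $a$-th coordinate of $\mathbf{p}_n$. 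Writing the left-hand side as $\sum_{\mathbf{q}}\P_n(\pi_n(\mathbf{p}_n)=\mathbf{q})$, the sum ranging over $\mathbf{q}\in\cS(n,l)$ with $q_a\le nz_a$ for all $a$, I would invoke \eqref{eq:density_est}: given $\veps>0$, for all large $n$ and uniformly over such $\mathbf{q}$ one has $\P_n(\pi_n(\mathbf{p}_n)=\mathbf{q})=n^{-l}\prod_{a=1}^l\rho(p_{n,a}/n,q_a/n)\,(1+O(\veps))$.

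Next I would remove the distinctness constraint in $\cS(n,l)$. The terms of $\sum_{\mathbf{q}\in[n]^l,\,q_a\le nz_a}\prod_a\rho(p_{n,a}/n,q_a/n)$ that do not come from $\cS(n,l)$ are those with $q_a=q_b$ for some $a\ne b$; there are $O(n^{l-1})$ of them and each summand is bounded by $\norm{\rho}_\infty^l<\infty$ (finite by continuity of $\rho$ on the compact square), so they contribute $O(1/n)$ after division by $n^l$ and may be discarded. The remaining full sum factorizes,
\[
\frac{1}{n^l}\sum_{\mathbf{q}\in[n]^l:\,q_a\le nz_a}\ \prod_{a=1}^l\rho\Big(\frac{p_{n,a}}{n},\frac{q_a}{n}\Big)=\prod_{a=1}^l\Big(\frac1n\sum_{q=1}^{\lfloor nz_a\rfloor}\rho\Big(\frac{p_{n,a}}{n},\frac qn\Big)\Big),
\]
and, using uniform continuity of $\rho$ together with $p_{n,a}/n\to x_a$, each factor is a Riemann sum converging to $\int_0^{z_a}\rho(x_a,t)\,dt$. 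Combining these steps and then letting $\veps\downarrow0$ gives the claimed limit; the identity $\int_0^1\rho(x_a,t)\,dt=1$ coming from the uniform marginals confirms that the limit is genuinely the distribution function of a random vector with the stated independent coordinates.

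There is no serious obstacle here — the argument is essentially a Riemann-sum computation — but the point requiring the most care is the order in which the two error terms are sent to zero: one must fix $\veps$ first, then take $n\to\infty$ so that the Riemann sums and the $O(1/n)$ diagonal correction settle, and only afterwards send $\veps\downarrow0$, since \eqref{eq:density_est} supplies a multiplicative error $1+O(\veps)$ uniform in $\mathbf{q}$ but not a rate in $n$. It is also worth recording that continuity of $\rho$ is used twice, once for boundedness (to control the discarded diagonal terms) and once for the Riemann-sum convergence, and that continuity of the limiting distribution function is exactly what permits testing convergence in distribution at every $\mathbf{z}\in[0,1]^l$ rather than only at its continuity points.
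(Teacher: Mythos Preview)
Your argument is correct and self-contained, but the paper proceeds differently. Rather than computing the multivariate distribution function via Riemann sums, the paper introduces auxiliary independent variables $Z_n(1),\dots,Z_n(n)$ with $\Q_n(Z_n(p)=q)\propto\rho(p/n,q/n)$ (Definition~\ref{def:stein}), and shows that the laws of $\pi_n(\mathbf{p}_n)$ and $Z_n(\mathbf{p}_n)$ are close in total variation by bounding $\sum_{\mathbf{q}\in\cS(n,l)}|\P_n(\pi_n(\mathbf{p}_n)=\mathbf{q})-\Q_n(Z_n(\mathbf{p}_n)=\mathbf{q})|$ via \eqref{eq:density_est}; the claimed limit is then immediate for the independent $Z_n(\mathbf{p}_n)$. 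Your direct CDF computation is arguably more elementary and avoids any auxiliary construction, while the paper's coupling approach yields the stronger total-variation statement and, more importantly, sets up the $Z_n$ machinery that is reused in the proofs of Theorems~\ref{thm:fixed_points} and~\ref{thm:cycle}.
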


Having proved Theorem \ref{thm:density_est} we now turn our focus on
the number of fixed points, or more generally the statistic
\[
N_n(\pi_n,\sigma_n):=\sum_{i=1}^n1\{\pi_n(i)=\sigma_n(i)\}
\]
for any $\sigma_n\in S_n$, which
denotes the number of overlaps between $\pi_n$ and $\sigma_n$. In this
notation the number of fixed points of $\pi_n$ equals $N(\pi_n,e_n)$,
where $e_n$ is the identity permutation in $S_n$. By \eqref
{eq:density_est} $N_n(\pi_n,\sigma_n)$ is approximately the sum of $n$
independent variables, and so should be approximately distributed as
Poisson. Our next theorem confirms this conjecture, showing convergence
to Poisson distribution of $N_n(\pi_n,\sigma_n)$ in distribution and in moments.

\begin{thm}\label{thm:fixed_points}
Suppose $\{\pi_n\}_{n\ge1}$ is a sequence of random permutations with $\pi_n\in S_n$ which satisfies
\eqref{eq:density_est} for some $\rho\in\cC$. If $\sigma_n$ converges
to $\mu$, then $\lim_{n\rightarrow\infty}\E N_n(\pi_n,\sigma_n)^k= \E
Poi({\mu[\rho]})^k
$ for any $k\in\N$, where $\mu[\rho]:=\int_{[0,1]^2}\rho(x,y)d\mu$,
and $Poi(\lambda)$ is the Poisson distribution with parameter $\lambda
$. In particular this implies $N_n(\pi_n,\sigma_n)\stackrel
{d}{\rightarrow}Poi({\mu[\rho]})$.

\end{thm}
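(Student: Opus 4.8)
The plan is to establish convergence of factorial moments, since for a Poisson limit it is cleaner to work with $\E\big[(N_n)_k\big]$ where $(N_n)_k := N_n(N_n-1)\cdots(N_n-k+1)$ is the falling factorial, and recall that $\E\big[Poi(\lambda)_k\big] = \lambda^k$. Ordinary moments then follow from factorial moments via Stirling numbers, so it suffices to show $\lim_{n\to\infty}\E\big[(N_n(\pi_n,\sigma_n))_k\big] = \mu[\rho]^k$ for every $k\in\N$; this simultaneously yields convergence in distribution by the method of moments, since the Poisson distribution is determined by its moments.

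First I would expand the falling factorial combinatorially:
\begin{align*}
\E\big[(N_n(\pi_n,\sigma_n))_k\big] = \sum_{{\bf p}\in\cS(n,k)} \P_n\big(\pi_n(p_a)=\sigma_n(p_a)\text{ for all }a\in[k]\big),
\end{align*}
where the sum is over ordered $k$-tuples of distinct indices. Writing ${\bf q} = (\sigma_n(p_1),\dots,\sigma_n(p_k))$, which lies in $\cS(n,k)$ since $\sigma_n$ is a permutation, each summand is exactly $\P_n(\pi_n({\bf p})={\bf q})$. Now apply Theorem \ref{thm:density_est}: uniformly over ${\bf p}\in\cS(n,k)$ we have $\P_n(\pi_n({\bf p})={\bf q}) = n^{-k}\prod_{a=1}^k \rho\big(p_a/n, \sigma_n(p_a)/n\big)\,(1+o(1))$. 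Hence
\begin{align*}
\E\big[(N_n(\pi_n,\sigma_n))_k\big] = (1+o(1))\,\frac{1}{n^k}\sum_{{\bf p}\in\cS(n,k)} \prod_{a=1}^k \rho\Big(\frac{p_a}{n}, \frac{\sigma_n(p_a)}{n}\Big).
\end{align*}
The sum over $\cS(n,k)$ differs from the unrestricted sum over all of $[n]^k$ only by the diagonal terms where some $p_a = p_b$; since $\rho$ is bounded (being continuous on the compact square) and there are $O(n^{k-1})$ such diagonal tuples, this discrepancy contributes $O(1/n)$ after the $n^{-k}$ normalization. Therefore it remains to show
\begin{align*}
\frac{1}{n^k}\sum_{p_1,\dots,p_k=1}^n \prod_{a=1}^k \rho\Big(\frac{p_a}{n}, \frac{\sigma_n(p_a)}{n}\Big) = \left(\frac{1}{n}\sum_{i=1}^n \rho\Big(\frac{i}{n}, \frac{\sigma_n(i)}{n}\Big)\right)^{\!k} \longrightarrow \mu[\rho]^k,
\end{align*}
where the factorization is immediate because the product over $a$ separates and each factor ranges over the same index set. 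The convergence of the single sum $\frac{1}{n}\sum_{i=1}^n \rho(i/n, \sigma_n(i)/n) \to \int_{[0,1]^2}\rho\,d\mu = \mu[\rho]$ is precisely the statement that $\sigma_n$ converges to $\mu$ in the sense of permutation limits applied to the continuous test function $\rho$ — this is where the hypothesis on $\sigma_n$ enters, and it is the only place it is used.

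I expect the main subtlety to be uniformity in the application of Theorem \ref{thm:density_est}: the $o(1)$ error there is uniform over ${\bf p},{\bf q}\in\cS(n,l)$, which is exactly what is needed to pull the $(1+o(1))$ factor outside the sum of $|\cS(n,k)| = \binom{n}{k}k! = O(n^k)$ terms without the accumulated error blowing up — so one should be careful to phrase it as a multiplicative $(1\pm\veps_n)$ bound with $\veps_n\to0$ independent of the tuple, rather than an additive one. A secondary point is that $\sigma_n$ is deterministic (convergence to $\mu$, not convergence in probability), so no concentration argument is needed and the test-function convergence is a plain limit; if one wanted to allow random $\sigma_n$ converging in probability, an extra uniform-integrability or bounded-convergence step would be required, but that is outside the stated hypothesis. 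Everything else is routine bookkeeping, so the proof is short once Theorem \ref{thm:density_est} is in hand.
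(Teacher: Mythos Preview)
Your argument is correct and in fact more direct than the paper's. The paper introduces an auxiliary family of \emph{independent} random variables $Z_n(1),\dots,Z_n(n)$ with $\Q_n(Z_n(p)=q)\propto\rho(p/n,q/n)$, sets $M_n(\sigma_n)=\sum_p 1\{Z_n(p)=\sigma_n(p)\}$, proves a Poisson limit (in distribution and in moments) for $M_n$ via the Chen--Stein dependency-graph bound, and then shows $|\E N_n^k-\E M_n^k|\to 0$ by expanding over $[n]^k$ and comparing termwise using the density estimate. You bypass the coupling and Stein's method entirely by computing the factorial moments $\E[(N_n)_k]$ directly from \eqref{eq:density_est}; this is the classical route to a Poisson law and is shorter here precisely because \eqref{eq:density_est} already gives the joint probabilities in product form. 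What the paper's detour buys is a reusable framework: the same auxiliary variables $Z_n$ and the same comparison argument are recycled in the proof of Theorem~\ref{thm:cycle} for the cycle counts $C_n(l)$, where the indicator events overlap in a more complicated way and a bare factorial-moment computation is less clean. For the present theorem alone, however, your approach is both valid and more economical; the uniformity caveat you flag (pulling the multiplicative $(1\pm\varepsilon_n)$ outside a sum of $O(n^k)$ terms) is exactly the right point to be careful about, and \eqref{eq:density_est} supplies precisely that uniformity.
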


\begin{remark}
Setting $\sigma_n=e_n$ it follows by Theorem \ref{thm:fixed_points}
that the number of fixed points in $\pi_n$ has a limiting Poisson
distribution with mean $\int_0^1\rho(x,x)dx,$ provided $\{\pi_n\}_{n\ge 1}$
satisfies \eqref{eq:density_est} for some $\rho\in\cC$.

\end{remark}

The random variable $N_n(\pi_n,e_n)=\sum_{i=1}^n 1\{\pi_n(i)=i\}$ is
essentially the number of cycles of length $1$, and a similar intuition
for Poisson approximation holds for cycles of length $l$ for any $l\ge
1$. In order to make this precise, we introduce a few more notations.
\begin{defn}
For any $l\in[n]$ setting $\cU(n,l):=\{{\bf p}\in\cS(n,l):p_1=\min
(p_a,a\in[l])\}$ note that $\cU(n,l)\subset\cS(n,l)$, and $|\cS
(n,l)|=l\times|\cU(n,l)|$.
For ${\bf p}\in\cS(n,l)$ let $T({\bf p})\in\cS(n,l)$ denote the
vector $(p_{2},p_{3},\cdots,p_{l},p_1).$

As an example if $l=3$ and $n=6$ then the vector ${\bf p}=(2,5,4)\in\cU
(n,l)$, as $2=\min(2,5,4)$. In this case $T({\bf p})=(5,4,2)\in\cS
(n,l)$ but does not belong to $\cU(n,l)$, as $5\ne\min(2,5,4)$. Thus
$T$ is the shift operator which shifts every co-ordinate by $1$.

For any $l\ge1$ let
\[
C_n(l):=\sum_{{\bf p}\in\cU(n,l)}1\{\pi_n({\bf p})=T({\bf p})\}=\frac
{1}{l}\sum_{{\bf p}\in\cS(n,l)}1\{\pi_n({\bf p})=T({\bf p})\}.
\]
Then $C_n(l)$ is the number of cycles of length $l$, where the factor
$l$ in the second definition accounts for the fact that every cycle is
counted $l$ times in the second sum. In particular we have
$C_n(1)=N_n(\pi_n,e_n)$ to be the number of fixed points. Also let
\[
c_\rho(l):=\frac{1}{l}\int_{[0,1]^l} \rho(x_1,x_2)\cdots, \rho
(x_l,x_1)dx_1\cdots dx_l.
\]
\end{defn}
The following theorem derives the limiting distribution for $C_{n}(l)$
under condition \eqref{eq:density_est}.
\begin{thm}\label{thm:cycle}
Suppose $\{\pi_n\}_{n\ge 1}$ is a sequence of random permutations with $\pi_n\in S/_n$ which satisfies
\eqref{eq:density_est} for some $\rho\in\cC$. Then for any $\{
k_1,\cdots,k_l\}\in\N^l$ we have
\[
\lim_{n\rightarrow\infty}\E\prod_{a=1}^l C_n(a)^{k_a}= \prod_{a=1}^l
\E Poi({c_\rho(a)})^{k_a}.
\]
In particular this implies
\[
\Big\{C_n(1),\cdots,C_n(l)\}\stackrel{d}{\rightarrow}\Big\{Poi({c_\rho
(1)}),\cdots,Poi({c_\rho(l)})\Big\},
\]
where $\{Poi({c_\rho(i))}\}_{i=1}^l$ are mutually independent.
\end{thm}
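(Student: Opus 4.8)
The plan is to run a joint method-of-moments argument at the level of \emph{factorial} moments, where the combinatorics of cycles is cleanest. For $Y\sim Poi(\lambda)$ one has $\E\,(Y)_{k}=\lambda^{k}$ with $(Y)_k:=Y(Y-1)\cdots(Y-k+1)$, factorial moments of an independent family multiply, and every ordinary monomial $\prod_{a}C_n(a)^{k_a}$ is a fixed finite nonnegative-integer combination, via Stirling numbers of the second kind, of products $\prod_{a}(C_n(a))_{j_a}$ with $j_a\le k_a$. Since $0\le C_n(a)\le n$ rules out any integrability worry, the theorem would follow once one shows, for every $(k_1,\dots,k_l)\in\N^l$,
\begin{align}\label{eq:fact-goal}
\lim_{n\to\infty}\E\prod_{a=1}^l (C_n(a))_{k_a}=\prod_{a=1}^l c_\rho(a)^{k_a};
\end{align}
indeed \eqref{eq:fact-goal} for all exponent vectors is the classical criterion for joint convergence of $(C_n(1),\dots,C_n(l))$ to an independent Poisson family, and combined with the Stirling expansion it yields the displayed moment limit as well.

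To prove \eqref{eq:fact-goal} I would expand using $C_n(a)=\sum_{{\bf p}\in\cU(n,a)}1\{\pi_n({\bf p})=T({\bf p})\}$: then $(C_n(a))_{k_a}$ is the sum over ordered $k_a$-tuples of \emph{distinct} vectors of $\cU(n,a)$ of the product of the corresponding indicators. Multiplying over $a$ and taking expectations gives
\[
\E\prod_{a=1}^l (C_n(a))_{k_a}=\sum \P_n\big(\pi_n({\bf P})=T^{\ast}({\bf P})\big),
\]
where the sum runs over families $\{{\bf p}^{(a,b)}\in\cU(n,a):a\in[l],\,b\in[k_a]\}$ with the $k_a$ vectors in each block $a$ distinct, ${\bf P}$ is the concatenation of all the ${\bf p}^{(a,b)}$, and $T^{\ast}$ applies the shift $T$ within each block. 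The crucial point is that two distinct vectors of $\cU(n,a)$ which are both cycles of one permutation have disjoint supports (cycles of a permutation with different minima are disjoint), so only families with pairwise disjoint supports contribute, and for those ${\bf P}\in\cS(n,m)$ and $T^{\ast}({\bf P})\in\cS(n,m)$, where $m:=\sum_{a}a k_a$.

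Next I would invoke \eqref{eq:density_est} with $l$ replaced by $m$ — note that it controls $\P_n(\pi_n({\bf p})={\bf q})$ for an \emph{arbitrary} target ${\bf q}$, so one may take ${\bf q}=T^{\ast}({\bf P})$ — to replace, uniformly over admissible families, $\P_n(\pi_n({\bf P})=T^{\ast}({\bf P}))$ by $n^{-m}(1+o(1))$ times a product of cyclic factors $\rho(p^{(a,b)}_1/n,p^{(a,b)}_2/n)\cdots\rho(p^{(a,b)}_a/n,p^{(a,b)}_1/n)$, one per vector. As $\rho$ is bounded on $[0,1]^2$, the $o(1)$ contributes $o(1)$ after summing the $O(n^m)$ terms and dividing by $n^m$; for the same reason one may reinstate the $O(n^{m-1})$ non-disjoint families and also drop the within-block distinctness, at total cost $O(1/n)$. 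The resulting unrestricted sum factorizes over $(a,b)$, giving
\[
\E\prod_{a=1}^l (C_n(a))_{k_a}=o(1)+\prod_{a=1}^l\Big(\tfrac1{n^a}\!\!\sum_{{\bf p}\in\cU(n,a)}\!\!\rho\big(\tfrac{p_1}{n},\tfrac{p_2}{n}\big)\cdots\rho\big(\tfrac{p_a}{n},\tfrac{p_1}{n}\big)\Big)^{k_a}.
\]
Since the cyclic $\rho$-product is $T$-invariant and each cyclic class has exactly one representative in $\cU(n,a)$, the inner sum equals $\tfrac1a\sum_{{\bf p}\in\cS(n,a)}(\cdots)$; passing from $\cS(n,a)$ to $[n]^a$ costs another $O(1/n)$ after the $n^{-a}$ scaling and exhibits each factor as a Riemann sum converging, by continuity of $\rho$, to $\tfrac1a\int_{[0,1]^a}\rho(x_1,x_2)\cdots\rho(x_a,x_1)\,dx=c_\rho(a)$, which is \eqref{eq:fact-goal}.

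I expect the genuine work to lie entirely in the bookkeeping of the middle two steps — identifying the nonzero contributions with disjoint cycle configurations and checking that each ``diagonal'' correction is of lower order — with every such estimate resting only on the boundedness of $\rho$ and on $|\cU(n,a)|=\Theta(n^a)$; no further analytic input should be needed.
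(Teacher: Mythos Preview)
Your argument is correct and takes a genuinely different route from the paper. The paper introduces an auxiliary family $\{Z_n(p)\}_{p\in[n]}$ of independent variables with marginals $\Q_n(Z_n(p)=q)\propto\rho(p/n,q/n)$, sets $D_n(a)=\sum_{{\bf p}\in\cU(n,a)}1\{Z_n({\bf p})=T({\bf p})\}$, proves $D_n(a)\stackrel{d}{\to}Poi(c_\rho(a))$ via Stein's method with dependency graphs, and then compares \emph{ordinary} joint moments of $(C_n(a))_a$ to those of $(D_n(a))_a$ using \eqref{eq:density_est}; the asymptotic independence is established separately by showing $\E\prod_a D_n(a)^{k_a}-\prod_a\E D_n(a)^{k_a}\to 0$. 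You instead go straight to joint \emph{factorial} moments of $(C_n(a))_a$, exploit the clean combinatorial fact that distinct elements of $\cU(n,a)$ (and vectors of different lengths) which simultaneously occur as cycles of one permutation must have disjoint supports, and then apply \eqref{eq:density_est} directly at level $m=\sum_a a k_a$ to reduce to a product of Riemann sums. Your approach is more elementary and self-contained---no Stein's method, no auxiliary coupling---and the factorial-moment framing makes the asymptotic independence fall out automatically rather than requiring a separate argument. The paper's route, on the other hand, is more modular (the same $Z_n$ device serves Theorems~\ref{thm:fixed_points} and~\ref{thm:cycle} and Corollary~\ref{cor:easy}) and the Stein step would in principle yield a rate of convergence in total variation, which the pure moment computation does not.
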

\begin{remark}
Thus the number of cycles of length $l$ has a limiting Poisson
distribution with parameter $c_\rho(l)$, whenever the sequence of
permutations $\pi_n$ satisfies \eqref{eq:density_est} for some $\rho\in
\cC$. In particular if $\pi_n$ is uniformly random then \eqref
{eq:density_est} holds for the function $\rho\equiv1$, in which case
$c_\rho(l)=\frac{1}{l}$ for all $l\ge1$. In this case we get back the
classical result that the number of cycles of length $l$ is
asymptotically $Poi(1/l)$, and the random variables $\{C_n(1),\cdots
,C_n(l)\}$ are mutually asymptotically independent for any $l\in\N$.
\end{remark}

\subsection{Applications}

As applications of Theorem \ref{thm:fixed_points} and Theorem \ref
{thm:cycle}, we will now derive the limit distributions of the number
of fixed points and cycle structures for three classes of non uniform
distributions on $S_n$.

\begin{enumerate}
\item[(i)]
The first result in this direction is the next corollary, which deals
with the Mallows model with Kendall's Tau.

\begin{cor}\label{cor:mallows}
Suppose $\pi_n$ is a random permutation on $S_n$ generated from the
Mallows model with Kendall's Tau defined in \eqref{eq:tau}, such that
$n(1-q(n))\rightarrow\beta\in(-\infty,\infty)$. In this case the
following conclusions hold with $\rho_\beta$ as defined in \eqref
{eq:mallows_density}.

\begin{enumerate}
\item
If $\{\sigma_n\}_{n\ge1}$ is a sequence of non random permutations
with $\sigma_n\in S_n$ converging to $\mu$, then $N_n(\pi_n,\sigma_n)$
converges to $Poi({\mu[\rho_\beta]})$ in distribution and in moments.

\item
$\Big\{C_n(1),\cdots,C_n(l)\}$ converges to $\Big\{Poi({c_{\rho_\beta
}(1)}),\cdots,Poi({c_{\rho_\beta}(l)})\Big\}$ in distribution and in
moments, where $\{Poi({c_{\rho_\beta}(i))}\}_{i=1}^l$ are mutually independent.

\end{enumerate}

\end{cor}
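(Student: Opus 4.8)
The plan is to check that the Mallows sequence $\pi_n$ satisfies the hypothesis \eqref{eq:density_est} with $\rho=\rho_\beta$; once that is done, conclusions (1) and (2) are nothing but Theorem~\ref{thm:fixed_points} and Theorem~\ref{thm:cycle} applied to this particular $\rho$. By Theorem~\ref{thm:density_est}, \eqref{eq:density_est} will hold as soon as I verify three things: (a) $\rho_\beta\in\cC$; (b) $\{\pi_n\}$ converges, in the sense of permutation limits, to $\mu_{\rho_\beta}$; and (c) $\{\pi_n\}$ is equi-continuous in both co-ordinates, i.e.\ satisfies \eqref{eq:equi_new}.

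Items (a) and (b) are supplied by Starr's theorem \cite{Starr} quoted above: for $n(1-q(n))\to\beta$ the empirical measures $\nu_{\pi_n}$ converge weakly in probability to the permuton $\mu_{\rho_\beta}$, and its density $\rho_\beta$, given by \eqref{eq:mallows_density}, is continuous and strictly positive on $[0,1]^2$ and — being the density of a permuton — has uniform marginals; thus $\rho_\beta\in\cC$ and (b) holds as well.

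The substance of the argument is item (c). Fix $l\ge1$ and $\mathbf p,\mathbf q,\mathbf r,\mathbf s\in\cS(n,l)$ with $\|\mathbf p-\mathbf r\|_\infty\le n\delta$ and $\|\mathbf q-\mathbf s\|_\infty\le n\delta$. From \eqref{eq:tau} I would write
\[
\P_n\bigl(\pi_n(\mathbf p)=\mathbf q\bigr)=\frac{1}{Z_{n,q}}\sum_{\pi\in S_n:\,\pi(p_a)=q_a\,\forall a}q^{Inv(\pi)},
\]
so $Z_{n,q}$ cancels in the ratio of \eqref{eq:equi_new} and it remains to compare the two constrained sums of $q^{Inv(\pi)}$. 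Both index sets $\{\pi:\pi(p_a)=q_a\ \forall a\}$ and $\{\pi:\pi(r_a)=s_a\ \forall a\}$ have cardinality $(n-l)!$, so a cardinality-preserving injection is automatically a bijection, and it suffices to build an injection $\Phi$ between them with $|Inv(\Phi(\pi))-Inv(\pi)|\le C(l)\,n\delta$ for a constant $C(l)$ depending only on $l$. The elementary gadget is a single cyclic shift on a block of consecutive positions: to relocate one constraint $\pi(p)=q$ to $\pi'(r)=q$ with $|p-r|\le n\delta$, let $I$ be the integers between $\min(p,r)$ and $\max(p,r)$ that are not positions of the other $l-1$ constraints, set $\pi'=\pi$ off $I$, and cyclically permute the values of $\pi$ among the positions of $I$ so that the value $q$ ends up on $r$. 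Since only pairs of positions lying in an interval of length $\le n\delta+1$ are disturbed and the multiset of values occupying $I$ is unchanged, a short count gives $|Inv(\pi')-Inv(\pi)|=O(l\,n\delta)$. Iterating this over all $l$ coordinates — both to move the positions $p_a\to r_a$ and, after passing to the inverse permutation (which preserves $Inv$), to move the values $q_a\to s_a$ — and composing the resulting $O(l)$ moves produces $\Phi$; the only point requiring care is that each intermediate constraint vector must stay in $\cS(n,l)$, which I would arrange by processing the coordinates in a suitable order and temporarily routing a coordinate out of the way whenever its target position is momentarily occupied.

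Given such a $\Phi$, if (eventually) $q(n)<1$ one gets $q(n)^{C(l)n\delta}\le \P_n(\pi_n(\mathbf p)=\mathbf q)/\P_n(\pi_n(\mathbf r)=\mathbf s)\le q(n)^{-C(l)n\delta}$, with the inequalities reversed when $q(n)>1$; and since $n(1-q(n))\to\beta$ forces $q(n)^{n}\to e^{-\beta}$, the outer bounds converge to $e^{\mp C(l)\beta\delta}$. Hence
\[
\limsup_{n\to\infty}\ \sup_{\mathbf p,\mathbf q,\mathbf r,\mathbf s}\ \Bigl|\frac{\P_n(\pi_n(\mathbf p)=\mathbf q)}{\P_n(\pi_n(\mathbf r)=\mathbf s)}-1\Bigr|\ \le\ e^{C(l)|\beta|\delta}-1,
\]
and the right-hand side tends to $0$ as $\delta\to0$, which is exactly \eqref{eq:equi_new}. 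With (a)--(c) verified, Theorem~\ref{thm:density_est} gives \eqref{eq:density_est} for $\rho=\rho_\beta$, and Theorem~\ref{thm:fixed_points} and Theorem~\ref{thm:cycle} then yield (1) and (2). I expect the main obstacle to be the construction of the inversion-almost-preserving bijection $\Phi$ in item (c), and in particular the bookkeeping that keeps all intermediate constraint configurations collision-free while controlling the change in inversions; items (a), (b) and the final passage from \eqref{eq:density_est} to the stated Poisson limits are either quoted or one-line estimates.
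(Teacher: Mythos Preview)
Your proposal is correct and follows the same strategy as the paper: reduce to verifying the equi-continuity condition \eqref{eq:equi_new} via a bijection between constrained sets of permutations that alters $Inv$ by at most $O(ln\delta)$, and then invoke Theorems~\ref{thm:density_est}, \ref{thm:fixed_points}, and~\ref{thm:cycle}. The paper's bijection is simpler than your cyclic-shift construction: it splits \eqref{eq:equi_new} into two one-sided conditions (moving only positions, then only values) and for each uses the direct swap $\Phi(\pi)(\mathbf r)=\mathbf q$, $\Phi(\pi)(\mathbf p)=\pi(\mathbf r)$, $\Phi(\pi)(i)=\pi(i)$ otherwise, which sidesteps most of the collision bookkeeping you flag as the main obstacle.
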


As an illustration of the Poisson approximation, in figure \ref{fig:degree_sequence} we compare the
histogram of the number of fixed points in a permutation of size
$n=100$ with the limiting Poisson prediction. We used $10000$
independent observations from the Mallows model with Kendall's Tau with
parameter $q(n)=e^{-20/n}$. From the picture it seems that the Poisson
prediction is fairly accurate for $n=100$. Since $q(n)<1$ it is
expected that this model will have more fixed points than a uniformly
random permutation, which is reflected in the fact that the mean of the
Poisson distribution is much larger than 1.

\begin{figure*}[h]
\centering
\begin{minipage}[c]{1.0\textwidth}
\centering
\includegraphics[width=5in]
    {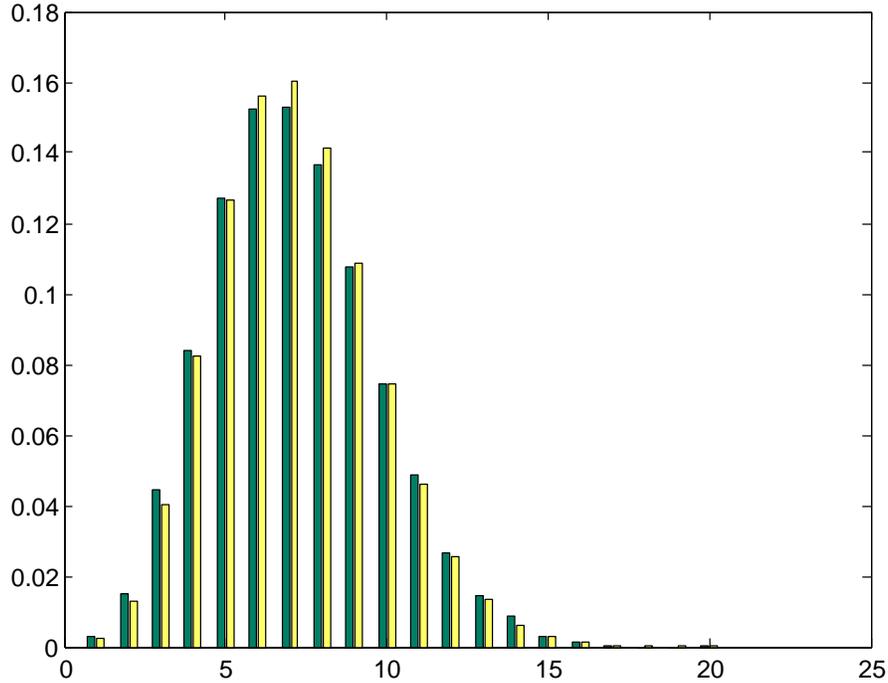}\\
\end{minipage}
\caption{\small{Bar plot of empirical distribution (from 10000 observations) of number of fixed points in a permutation of size $n=100$ from the Mallows model with Kendall's Tau with parameter $q_n=e^{-20/n}$ in green, compared to  the Poisson prediction in yellow.}}
\label{fig:degree_sequence}
\end{figure*}

\item[(ii)]
Another class of non uniform measures on permutations introduced by
the author in \cite{Mukherjee} is the following:

For any continuous function $f$ on the unit square, let $\Q_{n,\theta}$
be a one parameter exponential family with sufficient statistic
\[
\sum_{i=1}^n f\Big(\frac{i}{n},\frac{\pi_n(i)}{n}\Big)=n\nu_{\pi_n}[f].
\]
More precisely, the p.m.f. is given by
\begin{align}\label{eq:new_class}
\Q_{n,\theta,f}(\pi)=e^{n\theta\nu_{\pi_n}[f]-Z_n(f,\theta)},
\end{align}
where $Z_n(f,\theta)$ is the log normalizing constant of the model. In
particular the permutation model obtained by the following two specific
choices have been studied in the Statistics literature:

\begin{enumerate}
\item
$f(x,y)=|x-y|$, which gives the statistic $\sum_{i=1}^n|\pi(i)-i|$
known as the Spearman's Footrule.

\item
$f(x,y)=(x-y)^2$, which gives the statistic $\sum_{i=1}^n(\pi(i)-i)^2$
known as Spearman's rank correlation Statistic.
\end{enumerate}

See \cite[Chapter 5,6]{Diaconis} for more on these and other non
uniform permutation models considered in the Statistics literature. The
convergence of a sequence of random permutations $\pi_n$ generated from
$\Q_{n,f,\theta}$ of \eqref{eq:new_class} was shown in \cite[Theorem
1.4]{Mukherjee}. Building on this result, the next corollary derives
the limiting distributions of the number of fixed points and cycle
structure for a permutation $\pi_n$ generated from this model.

\begin{cor}\label{cor:new_limit}
Suppose $\pi_n$ is a random permutation on $S_n$ generated from the
model $\Q_{n,f,\theta}$ defined in \eqref{eq:new_class} for some
function $f$ which is continuous on the unit square. In this case the
following conclusions hold:
\begin{enumerate}
\item
The sequence $\{\pi_n\}_{n\ge1}$ converges weakly to a non random
measure $\mu_{f,\theta}\in\cM$ with a continuous density $g_{f,\theta}(.,.)$.

\item
If $\{\sigma_n\}_{n\ge1}$ is a sequence of non random permutations
with $\sigma_n\in S_n$ converging to $\mu$, then $N_n(\pi_n,\sigma_n)$
converges to $Poi({\mu[g_{f,\theta}]})$ in distribution and in moments.

\item
$\Big\{C_n(1),\cdots,C_n(l)\}$ converges to $\Big\{Poi({c_{g_{f,\theta
}}(1)}),\cdots,Poi({c_{g_{f,\theta}}(l)})\Big\}$ in distribution and in
moments, where $\{Poi({c_{g_{f,\theta}}(i)})\}_{i=1}^l$ are mutually
independent.

\end{enumerate}

\end{cor}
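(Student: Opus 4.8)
The plan is as follows. Part (1) is precisely \cite[Theorem 1.4]{Mukherjee}; the continuity and strict positivity of the limiting density $g_{f,\theta}$, and hence the fact that $g_{f,\theta}\in\cC$, follow readily from the fact that the optimizing permuton density there is of the Gibbs form $g_{f,\theta}(x,y)=e^{\theta f(x,y)-\lambda(x)-\gamma(y)}$, together with continuity and boundedness of $f$ on the compact square (which forces $\lambda,\gamma$ to be continuous and bounded, since $g_{f,\theta}$ has uniform marginals). Granting part (1), parts (2) and (3) follow immediately from Theorem \ref{thm:fixed_points} and Theorem \ref{thm:cycle}, once we check that $\{\pi_n\}_{n\ge1}$ satisfies \eqref{eq:density_est} with $\rho=g_{f,\theta}$. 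By Theorem \ref{thm:density_est} this reduces, given part (1), to verifying that $\{\pi_n\}_{n\ge1}$ is equi-continuous in both co-ordinates, i.e. satisfies \eqref{eq:equi_new} for each $l$; this is the only substantive step, and I would prove it by a swapping argument.

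Fix $l$, let ${\bf p},{\bf q},{\bf r},{\bf s}\in\cS(n,l)$ with $||{\bf p}-{\bf r}||_\infty\le n\delta$ and $||{\bf q}-{\bf s}||_\infty\le n\delta$, and write $P,Q,R,S$ for the underlying sets. Conditioning $\pi_n$ on its values at the coordinates of $P$ and summing over the remaining possibilities gives $\P_n(\pi_n({\bf p})={\bf q})=e^{\theta\sum_{a=1}^l f(p_a/n,q_a/n)-Z_n(f,\theta)}\,\widetilde Z_n({\bf p},{\bf q})$, where $\widetilde Z_n({\bf p},{\bf q}):=\sum_g\exp\{\theta\sum_{i\notin P}f(i/n,g(i)/n)\}$ and $g$ ranges over all bijections $[n]\setminus P\to[n]\setminus Q$; similarly for $({\bf r},{\bf s})$. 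In the ratio appearing in \eqref{eq:equi_new} the factor $e^{-Z_n(f,\theta)}$ cancels, and the ratio of the remaining prefactors equals $\exp\{\theta\sum_a(f(p_a/n,q_a/n)-f(r_a/n,s_a/n))\}$, which is within $e^{\pm l|\theta|\omega_f(\delta)}$ of $1$ by uniform continuity of $f$, where $\omega_f$ denotes the modulus of continuity of $f$. Thus it remains to control $\widetilde Z_n({\bf p},{\bf q})/\widetilde Z_n({\bf r},{\bf s})$.

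To this end I would build $\alpha,\beta\in S_n$ with $\beta(r_a)=p_a$ and $\alpha(q_a)=s_a$ for all $a$, each supported on at most $2l$ points and satisfying $||\beta-\mathrm{id}||_\infty,||\alpha-\mathrm{id}||_\infty\le l\,n\delta$. Such maps exist: the functional graph of the partial matching $r_a\mapsto p_a$ (on vertex set $P\cup R$) is a disjoint union of paths and cycles; closing each path into a cycle and extending by the identity off $P\cup R$ gives $\beta$, and since each path uses at most $l$ edges, each of displacement at most $n\delta$, every closing edge has displacement at most $l\,n\delta$; $\alpha$ is built the same way from $q_a\mapsto s_a$. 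Then $g\mapsto\alpha\circ g\circ\beta$ is a bijection from $\{g\colon[n]\setminus P\to[n]\setminus Q\}$ onto $\{h\colon[n]\setminus R\to[n]\setminus S\}$. Computing the exponent of $\alpha\circ g\circ\beta$ via the change of index $k=\beta(i)$ (for the $\beta$ part) and tracking the at most $2l$ coordinates $i$ with $g(\beta(i))\in\mathrm{supp}(\alpha)$ (for the $\alpha$ part), one finds that this exponent differs from that of $g$ by a sum of at most $4l$ increments of $f$, each over a single-coordinate displacement of at most $l\,n\delta$, hence by at most $4l|\theta|\,\omega_f(l\delta)$ in absolute value, uniformly in $g$, in $n$, and in the admissible ${\bf p},{\bf q},{\bf r},{\bf s}$. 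Summing over $g$ yields $\widetilde Z_n({\bf r},{\bf s})/\widetilde Z_n({\bf p},{\bf q})\in[e^{-4l|\theta|\omega_f(l\delta)},e^{4l|\theta|\omega_f(l\delta)}]$, so the ratio in \eqref{eq:equi_new} lies in $[e^{-c_l(\delta)},e^{c_l(\delta)}]$ with $c_l(\delta):=l|\theta|(\omega_f(\delta)+4\omega_f(l\delta))$; letting $n\to\infty$ and then $\delta\to0$ (so that $c_l(\delta)\to0$ for each fixed $l$) gives \eqref{eq:equi_new}.

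The main obstacle is producing the bijection with support \emph{bounded in $n$}: a naive order-preserving extension of the matching $r_a\mapsto p_a$ can move $\Theta(n\delta)$ coordinates, and although each moves by only $O(l)$, summing $\Theta(n\delta)$ increments of a merely continuous $f$ need not be negligible (it would be if $f$ were Lipschitz). The path-closing construction keeps at most $2l$ coordinates moving, so uniform continuity of $f$ alone closes the argument. The remaining ingredients---the reduction identity for $\widetilde Z_n$, the verification that $g\mapsto\alpha g\beta$ is a bijection, the exact bookkeeping of moved coordinates, and reading off continuity and positivity of $g_{f,\theta}$ from \cite{Mukherjee}---are routine.
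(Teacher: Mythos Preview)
Your proposal is correct and follows the paper's strategy: cite \cite[Theorem 1.4]{Mukherjee} for part (1) (the paper makes your parenthetical continuity remark explicit via the marginal identity $e^{-a_{f,\theta}(x)}=\int_0^1 e^{\theta f(x,y)+b_{f,\theta}(y)}\,dy$, from which integrability of $e^{b_{f,\theta}}$ and then continuity of $e^{-a_{f,\theta}}$ are read off), and reduce parts (2)--(3) to verifying equi-continuity in both coordinates via a swap bijection and the uniform continuity of $f$.

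The only difference is in how the bijection is built. The paper splits \eqref{eq:equi_new} into the two one-sided conditions \eqref{eq:equi_new1} and \eqref{eq:equi_new2} and for each reuses the simple swap $\Phi$ (resp.\ $\widetilde\Phi$) from the Mallows proof, which already moves only $2l$ coordinates, each by at most $n\delta$; your path-closing construction of $\alpha,\beta$ handles both sides at once and is more careful about the case where $P\cap R\neq\emptyset$ or $Q\cap S\neq\emptyset$ (which the paper's $\Phi$ treats only implicitly), at the cost of a slightly larger displacement $l\cdot n\delta$ and heavier bookkeeping. Either way the ratio in \eqref{eq:equi_new} is trapped in $[\,e^{-C_l|\theta|\omega_f(C'_l\delta)},\,e^{C_l|\theta|\omega_f(C'_l\delta)}\,]$, which goes to $1$ as $\delta\to0$ for each fixed $l$, so the two arguments are equivalent in content.
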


\item[(iii)]
The final class of permutation models that we consider is a non
parametric model with a measure as the parameter, as opposed to the
previous two models which are one parameter models. This class of
models will be referred to as $\mu$ random permutations, and was first
introduced in \cite{HKMRS}.

Given any $\mu\in\cM$ let $(X_1,Y_1),\cdots,(X_n,Y_n)$ be i.i.d.
random vectors with law $\mu$. Define a permutation $\pi_n^{\mu}\in
S_n$ as follows:

If there exists $l\in[n]$ such that $X_l=X_{(i)},Y_l=Y_{(j)}$, then
set $\pi_n^{\mu}(i)=j$.
To visualize this definition differently, let $\sigma_x$ and $\sigma_y$
be the permutations of order $n$ such that $x_{\sigma_x(1)}< x_{\sigma
_x(2)}<\cdots x_{\sigma_x(n)}$ and $y_{\sigma_y(1)}< y_{\sigma
_y(2)}<\cdots y_{\sigma_y(n)}$, respectively (since the marginals of
$\mu$ are uniform, ties do not occur with probability 1). Then the
above definiton is equivalent to setting $\pi^{\mu}_n=\sigma_y^{-1}\circ
\sigma_x$. It is easy to see that if $\mu$ has density $\rho$, then for
any permutation $\pi_n\in S_n$ one has
\begin{align}\label{eq:mu_random}
\P_n(\pi_n^\mu=\pi_n)=n!\int_{0<u_1<\cdots<u_n<1,0<v_1<\cdots<v_n<1}
\prod_{i=1}^n \rho\Big(u_i,v_{\pi_n(i)}\Big)du_i dv_i.
\end{align}
By \cite[Lemma 4.2]{HKMRS} it follows that $\pi_n^\mu$ converges weakly
to $\mu$ in probability.

Our next corollary derives limiting distributions for $\mu$ random
permutations, when the measure $\mu$ has a continuous density function
with respect to Lebesgue measure.

\begin{cor}\label{cor:mu_random}
Suppose $\pi_n$ is a $\mu_\rho$ random permutation in $S_n$ for some
$\rho\in\cC$.
In this case the following conclusions hold:
\begin{enumerate}
\item
If $\{\sigma_n\}_{n\ge1}$ is a sequence of non random permutations
with $\sigma_n\in S_n$ which converges to $\mu$, then
$N_n(\pi_n,\sigma_n)$ converges to $Poi({\mu[\rho]})$ in distribution
and in moments.

\item
$\Big\{C_n(1),\cdots,C_n(l)\}$ converges to $\Big\{Poi({c_{\rho
}(1)}),\cdots,Poi({c_{\rho}(l)})\Big\}$ in distribution and in moments,
where $\{Poi({c_{\rho}(i)})\}_{i=1}^l$ are mutually independent.
\end{enumerate}
\end{cor}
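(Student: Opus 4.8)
We only need to show that a $\mu_\rho$ random permutation satisfies \eqref{eq:density_est} for the given $\rho\in\cC$; granting this, its two conclusions are immediate from Theorem \ref{thm:fixed_points} (with $\sigma_n\to\mu$, giving $N_n(\pi_n,\sigma_n)\to Poi(\mu[\rho])$) and Theorem \ref{thm:cycle} (giving the cycle statistics with parameters $c_\rho(l)$). To obtain \eqref{eq:density_est} I would apply Theorem \ref{thm:density_est}, so it suffices to verify its two hypotheses for $\pi_n=\pi_n^{\mu_\rho}$: that $\{\pi_n^{\mu_\rho}\}$ converges to $\mu_\rho$, and that it is equi-continuous in both co-ordinates. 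Convergence is exactly \cite[Lemma 4.2]{HKMRS}, recalled right after \eqref{eq:mu_random}. So the whole content of the corollary is the verification of \eqref{eq:equi_new} for $\mu_\rho$ random permutations.

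I would first reduce \eqref{eq:equi_new} to the one-co-ordinate estimate \eqref{eq:equi_old} via the inverse symmetry of the model. Since $\pi_n^{\mu_\rho}=\sigma_y^{-1}\circ\sigma_x$, its inverse is $\sigma_x^{-1}\circ\sigma_y$, which is precisely the $\mu_{\rho'}$ random permutation built from the same data with $\rho'(x,y):=\rho(y,x)\in\cC$. Using $\P_n(\pi_n({\bf p})={\bf q})=\P_n(\pi_n^{-1}({\bf q})={\bf p})$ one can factor
\[
\frac{\P_n(\pi_n^{\mu_\rho}({\bf p})={\bf q})}{\P_n(\pi_n^{\mu_\rho}({\bf r})={\bf s})}=\frac{\P_n(\pi_n^{\mu_\rho}({\bf p})={\bf q})}{\P_n(\pi_n^{\mu_\rho}({\bf r})={\bf q})}\cdot\frac{\P_n(\pi_n^{\mu_{\rho'}}({\bf q})={\bf r})}{\P_n(\pi_n^{\mu_{\rho'}}({\bf s})={\bf r})},
\]
so \eqref{eq:equi_new} for $\pi_n^{\mu_\rho}$ follows once we know the one-co-ordinate bound \eqref{eq:equi_old} for every $\mu_\tau$ random permutation with $\tau\in\cC$ (apply it once with $\tau=\rho$, moving ${\bf p}\to{\bf r}$, and once with $\tau=\rho'$, moving ${\bf q}\to{\bf s}$).

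For the one-co-ordinate bound I would use the conditional-independence description of $\pi_n^{\mu_\rho}$: sample $X_1,\dots,X_n$ i.i.d.\ uniform, then $Y_i\mid X_i$ with density $\rho(X_i,\cdot)$, sort by $X$ to get $U_1<\cdots<U_n$ with companions $W_1,\dots,W_n$ (so $W_i\mid U_i$ has density $\rho(U_i,\cdot)$, conditionally independent given $(U_1,\dots,U_n)$), and set $\pi_n^{\mu_\rho}(i)$ equal to the rank of $W_i$ among $W_1,\dots,W_n$. Conditioning on $(U_1,\dots,U_n)$ and on the $W$-values $w_1,\dots,w_l$ of the $l$ pinned coordinates, the event $\{\pi_n^{\mu_\rho}({\bf p})={\bf q}\}$ becomes a prescription of threshold counts $\#\{i:\,W_i<w_a\}$ for the remaining, conditionally independent, $W_i$'s, whose threshold probabilities are $\frac1{n-l}\sum_i\int_0^{w}\rho(U_i,y)\,dy\approx w$ since $\rho$ has uniform marginals. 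A local central limit theorem for these (independent, non-identically distributed, uniformly bounded) Bernoulli's then shows that, uniformly in ${\bf p},{\bf q}$, $\P_n(\pi_n^{\mu_\rho}({\bf p})={\bf q})$ equals an integral over $w_1,\dots,w_l$ concentrated near $w_a\approx q_a/n$, with integrand $\approx\prod_a\rho(U_{p_a},w_a)$ times an $n^{-l}$-sized local count density; moving ${\bf p}\to{\bf r}$ shifts $U_{p_a}$ by $O(\delta)$ (using $\sup_i|U_i-i/n|\to0$, e.g.\ via Dvoretzky--Kiefer--Wolfowitz), and uniform continuity of $\rho$ on the compact square forces the ratio in \eqref{eq:equi_old} to $1$ as $\delta\to0$.

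The main obstacle is uniformity of this local limit estimate over \emph{all} ${\bf p},{\bf q}\in\cS(n,l)$, in particular for ranks $q_a$ that are $o(n)$ or within $o(n)$ of $n$, where the governing binomial variance $\tfrac{q_a(n-q_a)}{n}$ degenerates and a plain Gaussian approximation is insufficient: one must run the local CLT with quantitative error terms valid down to the boundary (or handle extreme ranks by a direct Poisson-type estimate), and simultaneously control the conditioning on $(U_1,\dots,U_n)$ uniformly, which can be arranged by combining the DKW bound with the crude two-sided bound $m\le\rho\le M$ on $[0,1]^2$ to absorb the low-probability exceptional configurations. Once \eqref{eq:equi_new} is in hand, Theorem \ref{thm:density_est} yields \eqref{eq:density_est}, and Theorems \ref{thm:fixed_points} and \ref{thm:cycle} complete the proof.
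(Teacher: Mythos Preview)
Your high-level reduction matches the paper: verify \eqref{eq:equi_new} and invoke Theorems \ref{thm:density_est}, \ref{thm:fixed_points}, \ref{thm:cycle}; your inverse-symmetry trick is a clean way to reduce \eqref{eq:equi_new} to the one-coordinate bound \eqref{eq:equi_old} (the paper just says ``similar arguments'' for the second coordinate). Where you diverge is in verifying \eqref{eq:equi_old}. Your local-CLT route for the rank counts is plausible, but as you concede it runs into a genuine uniformity difficulty near the boundary, and making the conditional Gaussian/Poisson estimates uniform in ${\bf q}$ and in the order statistics would take real work that your sketch does not carry out. The paper bypasses this entirely with a direct integrand comparison in the formula \eqref{eq:mu_random}: pair each $\pi_n\in\Omega({\bf p},{\bf q})$ with the swapped permutation in $\Omega({\bf r},{\bf q})$, and note that on the event $A_n:=\{\max_i|U_i-i/n|\le\delta\}$ the paired integrands $\prod_i\rho(u_i,v_{\pi_n(i)})$ differ only through factors $\rho(u_{p_a},\cdot)$ versus $\rho(u_{r_a},\cdot)$, whose ratio is uniformly close to $1$ since $|u_{p_a}-u_{r_a}|\le 3\delta$ and $\rho$ is uniformly continuous. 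The complement $A_n^c$ has probability $O(ne^{-n\delta^2})$ (Hoeffding, or your DKW) and is absorbed against the crude polynomial lower bound $\P_n(\pi_n({\bf r})={\bf q})\ge(m/(Mn))^l$, itself obtained from the same two-sided bound $m\le\rho\le M$ you invoke. This is shorter, needs no limit theorem, and is automatically uniform over all ${\bf p},{\bf q},{\bf r}$, which is exactly the obstacle your approach leaves open.
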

\end{enumerate}

Even though the weak convergence of the random permutation sequence is
the main ingredient in all the above results, the equi-continuity in
both co-ordinates is not just a technical requirement. The following
example shows that the conclusions of Theorems \ref{thm:density_est}
and \ref{thm:fixed_points} might not hold if the equi-continuity
condition fails.

\begin{ppn}\label{ppn}
Let $\R_{n,\theta}$ be a probability distributon on $S_n$ with the
p.m.f.
\[
\R_{n,\theta}(\pi_n)=e^{\theta N_n(\pi_n,e_n)-Z_n(\theta)}
\]
where $e_n$ is the identity permutation, and $N_n(\pi_n,e_n)$ is the
number of fixed points in $\pi_n$. Then for every $\theta\ne0$ the
following conclusions hold:

\begin{enumerate}
\item[(a)]
The random variable $N_n(\pi_n,e_n)$ converges to a Poisson random
variable with mean $e^\theta$ in distribution and in moments.

\item[(b)]
$\pi_n$ converges weakly to $u$, the uniform distribution on $[0,1]^2$
which is free of $\theta$.

\item[(c)]
\[
\frac{\R_{n,\theta}(\pi_n(1)=1,\pi_n(2)=2)}{\R_{n,\theta}(\pi_n(1)=2,\pi
_n(2)=1)}=e^{2\theta}\ne1.
\]

\end{enumerate}

\end{ppn}

\begin{remark}
Thus even though the sequence of random permutations under $\R_{n,\theta
}$ converge to Lebesgue measure (which is free of $\theta$ and has a
continuous density), the number of fixed points has a limiting Poisson
distribution which depends on $\theta$. This is the case as
equi-continuity in both coordinates does not hold here, as demonstrated
by part (c) of the proposition.

\end{remark}
\subsection{Scope of future research}
For the Mallows model with Kendall's Tau, the results of this paper
only apply for the case $n(1-q(n))=O(1)$. If $n(1-q(n))\rightarrow\infty
$, one should expect the number of fixed points to go to $+\infty$, and
computing the weak limits/limiting distribution after centering/scaling
in this case remain open. In another vein, one might expect that
convergence in the sense of permutations along with {``mild''}
regularity conditions imply the weak convergence of $LIS$, as worked
out for the Mallows model with Kendall's Tau in \cite{MS}. Finally,
computing the limiting density for the model defined in \eqref
{eq:new_class} might help give a more explicit description for the
parameters of the limiting distributions of Corollary \ref
{cor:new_limit}, as well as give non trivial copulas (bivariate
distributions with uniform marginals) which constitute a subject area
of its own in Finance.

\subsection{Outline of the paper}
Section \ref{sec:two} gives the proof of Theorem \ref{thm:density_est},
Corollary \ref{cor:easy}, and Theorems \ref{thm:fixed_points} and \ref
{thm:cycle}. Section \ref{sec:three} concludes the paper by proving
Corollaries \ref{cor:mallows}-\ref{cor:mu_random}, and Proposition \ref{ppn}.

\section{Proofs of main results}\label{sec:two}
\subsection{Proof of Theorem \ref{thm:density_est} and Corollary \ref{cor:easy}}

\begin{proof}[Proof of Theorem \ref{thm:density_est}]

For $k\in\N$ setting
\[
\epsilon_n(k):=\sup_{{\bf p},{\bf q},{\bf r},{\bf s}\in\cS(n,l):||{\bf
p}-{\bf r}||_\infty\le n/k}\Big|\frac{\P_n(\pi_n({\bf p})={\bf q})}{\P
_n(\pi_n({\bf r})={\bf s})}-1\Big|,
\]
condition \eqref{eq:equi_new} can be stated as
\begin{align}\label{eq:obvious}
\lim_{k\rightarrow\infty}\lim_{n\rightarrow\infty}\epsilon_n(k)=0.
\end{align}
Fix $k\in\N$ and partition $(0,1]$ as $\cup_{a=1}^kI_i$ with $I_a:=\Big
(\frac{i-1}{k},\frac{i}{k}\Big]$. Setting
\[
A_n^k:=\prod_{a=1}^lI_{\lceil kp_a/n\rceil},\quad B_n^k:=\prod
_{a=1}^lI_{\lceil kq_a/n\rceil}
\]
note that $\frac{1}{n}{\bf p}\in A_n^k, \frac{1}{n}{\bf q}\in B_n^k$.
Now for any ${\bf r},{\bf s}\in\cS(n,l)$ such that $\frac{1}{n}{\bf
r}\in A_n^k,\frac{1}{n}{\bf s}\in B_n^k$ we have
\begin{align*}
\Big|\frac{\P_n(\pi_n({\bf p})={\bf q})}{\P_n(\pi_n({\bf r})={\bf
s})}-1\Big|\le\epsilon_n(k),
\end{align*}
which on summing over ${\bf r}\in A_n^k,{\bf s}\in B_n^k$ and noting
that the number of terms summed is at least $(n-1)^{2l} k^{-2l}$ gives
\begin{align*}
\P_n(\pi_n({\bf p})={\bf q})\le&(1+\epsilon_n(k))
\frac{k^{2l}}{(n-1)^{2l}}\sum_{{\bf r}\in A_n^k, {\bf s}\in B_n^k} \P
_n(\pi_n({\bf r})={\bf s})\\
=&(1+\epsilon_n(k))
\frac{k^{2l}n^l}{(n-1)^{2l}}\E{\nu_\pi}_n^{(l)}[A_n^k\times B_n^k]\\
=&(1+\epsilon_n(k))
\frac{k^{2l}n^l}{(n-1)^{2l}}\E\prod_{a=1}^l\nu_{\pi_n}[I_{\lfloor k
p_a/n\rfloor}\times I_{\lfloor k q_a/n\rfloor}],
\end{align*}
where $\nu^l_{\pi_n}$ denotes the $l$ fold product measure of $\nu_{\pi
_n}$. Using the fact that $\rho$ is the density for $\mu_\rho$ this
readily gives
\begin{align}
\sup_{{\bf p},{\bf q}\in\cS(n,l)}\frac{n^l \P_n(\pi_n({\bf p})={\bf
q})}{\prod_{a=1}^l\rho(\frac{p_a}{n},\frac{q_a}{n})}\le&(1+\epsilon
_n(k))\times\frac{n^{2l}}{(n-1)^{2l}} \label{eq:bound-1}\\
\times& \E\sup_{{\bf i},{\bf j}\in[k]^l} \frac{ \prod_{a=1}^l\nu_{\pi
_n}[I_{i_a}\times I_{j_a}]}{\prod_{i=1}^l\mu_\rho[I_{i_a}\times
I_{j_a}]}\label{eq:bound-2}\\
&\times\sup_{{\bf x},{\bf y},{\bf z},{\bf w}\in[0,1]^l:||{\bf x}-{\bf
z}||_\infty\le1/k, ||{\bf y}-{\bf w}||_\infty\le1/k}\frac{\prod
_{a=1}^l\rho(x_a,y_a)}{\prod_{a=1}^l\rho(z_a,w_a)}.\label{eq:bound-3}
\end{align}
The term in the r.h.s. of \eqref{eq:bound-1} converges to $1$ on
letting $n\rightarrow\infty$ followed by $k\rightarrow\infty$, using
\eqref{eq:obvious}.

Since $\nu_{\pi_n}$ converges to $\mu_\rho$, by \cite[Theorem
5.2]{HKMRS} we have
\begin{align*}\max_{a\in[l]}\max_{i_a\in[k]}\Big|\nu_{\pi
_n}[I_{i_a}\times I_{j_a}]-\mu_\rho[I_{i_a}\times I_{j_a}]\Big|\stackrel
{p}{\rightarrow}0,
\end{align*}
which along with the observation that $\mu_\rho[I_{i_a}\times I_{j_a}]$
is uniformly bounded away from $0$ gives
\begin{align*}
\max_{a\in[l]}\max_{i_a\in[k]} \frac{ \prod_{a=1}^l\nu_{\pi
_n}[I_{i_a}\times I_{j_a}]}{\prod_{a=1}^l\mu_\rho[I_{i_a}\times
I_{j_a}]}\stackrel{p}{\rightarrow}1
\end{align*}
as $n\rightarrow\infty$, for $k$ fixed.
An application of Dominated Convergence theorem implies that the term
in \eqref{eq:bound-2} converges to $1$ as well. Finally \eqref
{eq:bound-3} is free of $n$, and converges to $1$ as $k\rightarrow\infty
$ by continuity of $\rho$. Combining this gives
\[
\limsup_{k\rightarrow\infty}\limsup_{n\rightarrow\infty}\sup_{{\bf
p},{\bf q},{\bf r}\in\cS(n,l):||{\bf p}-{\bf r}||_\infty\le n/k} \frac
{n^l \P_n(\pi_n({\bf p})={\bf q})}{\prod_{a=1}^l\rho(\frac{p_a}{n},\frac
{q_a}{n})}\le1,
\]
thus giving the upper bound in \eqref{eq:density_est}. A similar proof
gives the lower bound, thus completing the proof of the theorem.
\end{proof}

We now introduce some auxiliary variables, to be used in the proofs of
Corollary \ref{cor:easy}, and Theorems \ref{thm:fixed_points} and \ref
{thm:cycle}.

\begin{defn}\label{def:stein}
For every $n\ge1$ let $\{Z_n(1),\cdots,Z_n(n)\}$ be mutually
independent random variables supported on $[n]$ such that the marginal
laws are given by
\[
\Q_n(Z_n(p)=q)=\frac{\rho(p/n,q/n)}{\sum_{s=1}^n \rho(p/n,
q/n)}
\]
for some $\rho\in\cC$.
Also set
\[
M_n(\sigma_n):=\sum_{p=1}^n1\{Z_n(p)=\sigma_n(p)\},
\]
and for $l\ge1$ set
\[
D_n(l):=\sum_{{\bf p}\in\cU(n,l)}1\{Z_n({\bf p})=T({\bf p})\}.
\]
\end{defn}

\begin{proof}[Proof of Corollary \ref{cor:easy}]
With $Z_n$ as constructed in definition \ref{def:stein} we have
\begin{align*}
&\sum_{{\bf q}\in\cS(n,l)}\Big|\P_n(\pi_n({\bf p}_n)={\bf q})-\Q
_n(Z_n({\bf p}_n)={\bf q})\Big|\\
\le&\max_{{\bf p},{\bf q}\in\cS(n,l)}\Big|\frac{\P_n(\pi_n({\bf
p}_n)={\bf q})}{\Q_n(Z({\bf p}_n)={\bf q})}-1\Big|\sum_{{\bf q}\in \cS
(n,l)}\Q_n(Z_n({\bf p}_n)={\bf q})\\
=&\max_{{\bf p},{\bf q}\in\cS(n,l)}\Big|\frac{\P_n(\pi_n({\bf
p}_n)={\bf q})}{\Q_n(Z_n({\bf p}_n)={\bf q})}-1\Big|,
\end{align*}
which goes to $0$ by \eqref{eq:density_est}. This implies that the laws
of $\pi_n({\bf p}_n)$ and $Z_n({\bf p}_n)$ are close in total
variation. Since the desired conclusion can be verified easily for
$Z_n({\bf p}_n)$, the proof is complete.
\end{proof}

\subsection{Proofs of Theorem \ref{thm:fixed_points} and \ref{thm:cycle}}

We will use Stein's method based on dependency graphs to prove Poisson
limit theorems, as explained below:

Let $\{X_\alpha\}_{\alpha\in I}$ be a finite set of Bernoulli random
variables. A dependency graph for $\{X_\alpha\}_{\alpha\in I}$ is a
graph with node set $I$ and edge set $E$, such that if $I_1,I_2$ are
disjoint subsets of $I$ with no edges connecting them, then $\{X_\alpha
\}_{\alpha\in I_1}$ and $\{X_\beta\}_{\beta\in I_2}$ are independent.
Let $N_\alpha$ be the neighborhood of vertex $\alpha$, i.e. $N(\alpha
):=\{\beta\in I:(\alpha,\beta)\in E\}\cup\{\alpha\}$. Then one has the
following Poisson approximation result, first proved in \cite{AGG}.

\begin{thm}\label{thm:stein}\cite[Theorem 15]{CDM}
Let $\{X_\alpha\}_{\alpha\in I}$ be a finite set of Bernoulli random
variables with dependency graph $(I,E)$. 
Then setting $\lambda:=\sum_{\alpha\in I} p_\alpha$, $W:=\sum_{\alpha
\in I} X_\alpha$ we have
\[
||\cL(W)-\cL(Poi(\lambda))||_{TV}\le\sum_{\alpha\in I}\sum_{\beta\in
N(\alpha)/\{\alpha\}}\E X_{\alpha}X_\beta+\sum_{\alpha\in I}\sum_{\beta
\in N(\alpha)}\E X_\alpha\E X_\beta.
\]
\end{thm}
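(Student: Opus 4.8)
The plan is to prove this by Stein's method for Poisson approximation, in the form introduced in \cite{AGG}. For a subset $A\subseteq\{0,1,2,\dots\}$ let $g_A$ denote the bounded function on $\{0,1,2,\dots\}$ solving the Stein recursion
\[
\lambda\,g_A(j+1)-j\,g_A(j)=1\{j\in A\}-\P\big(Poi(\lambda)\in A\big),\qquad g_A(0):=0,
\]
which has a unique such solution, expressible in closed form through cumulative sums of the Poisson weights. The only analytic fact about it that I will need is the ``Stein factor'' bound on its increments, $\sup_{j\ge0}\big|g_A(j+1)-g_A(j)\big|\le\min\{1,\lambda^{-1}\}\le1$, valid for every $A$. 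Evaluating the recursion at $j=W$ and taking expectations gives, for every $A$,
\[
\P(W\in A)-\P\big(Poi(\lambda)\in A\big)=\E\big[\lambda\,g_A(W+1)-W\,g_A(W)\big],
\]
and since $||\cL(W)-\cL(Poi(\lambda))||_{TV}=\sup_{A}\big|\P(W\in A)-\P(Poi(\lambda)\in A)\big|$, it suffices to bound $\big|\E[\lambda\,g(W+1)-W\,g(W)]\big|$ by the right-hand side of the asserted inequality for an arbitrary $g$ with $\sup_{j}|g(j+1)-g(j)|\le1$.

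The next step is to use the dependency-graph hypothesis. Write $\lambda=\sum_{\alpha\in I}p_\alpha$ with $p_\alpha:=\E X_\alpha$, and for each $\alpha$ decompose $W=X_\alpha+V_\alpha+U_\alpha$, where $V_\alpha:=\sum_{\beta\in N(\alpha)\setminus\{\alpha\}}X_\beta$ and $U_\alpha:=\sum_{\beta\notin N(\alpha)}X_\beta$; by the definition of a dependency graph $X_\alpha$ is independent of $U_\alpha$, hence also of any function of $U_\alpha$. Expanding,
\[
\E\big[\lambda\,g(W+1)-W\,g(W)\big]=\sum_{\alpha\in I}\Big(p_\alpha\,\E\,g(W+1)-\E\big[X_\alpha\,g(W)\big]\Big),
\]
and inserting the quantity $p_\alpha\,\E\,g(U_\alpha+1)=\E\big[X_\alpha\,g(U_\alpha+1)\big]$ (the equality holding by the independence just noted) rewrites the $\alpha$-th term as
\[
p_\alpha\,\E\big[g(W+1)-g(U_\alpha+1)\big]-\E\big[X_\alpha\big(g(W)-g(U_\alpha+1)\big)\big].
\]

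Now I would bound the two pieces using $\sup_j|g(j+1)-g(j)|\le1$, so that $|g(m)-g(m')|\le|m-m'|$ for integers $m,m'\ge0$. Since $W+1-(U_\alpha+1)=X_\alpha+V_\alpha\ge0$, the first piece is at most $p_\alpha\,\E[X_\alpha+V_\alpha]=\sum_{\beta\in N(\alpha)}p_\alpha p_\beta$ in absolute value; and since on $\{X_\alpha=1\}$ one has $W-(U_\alpha+1)=V_\alpha\ge0$, the second piece is at most $\E[X_\alpha V_\alpha]=\sum_{\beta\in N(\alpha)\setminus\{\alpha\}}\E[X_\alpha X_\beta]$ in absolute value. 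Summing over $\alpha\in I$ and then taking the supremum over $A$ yields
\[
||\cL(W)-\cL(Poi(\lambda))||_{TV}\le\sum_{\alpha\in I}\sum_{\beta\in N(\alpha)\setminus\{\alpha\}}\E\big[X_\alpha X_\beta\big]+\sum_{\alpha\in I}\sum_{\beta\in N(\alpha)}\E X_\alpha\,\E X_\beta,
\]
which is exactly the claimed bound (the term $\sum_\alpha p_\alpha\E[X_\alpha+V_\alpha]$ reassembles into $\sum_\alpha\sum_{\beta\in N(\alpha)}p_\alpha p_\beta$ because $N(\alpha)=\{\alpha\}\cup(N(\alpha)\setminus\{\alpha\})$).

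I expect the main obstacle to be not the combinatorial manipulation above, which is routine, but the analytic input deferred in the first step: the existence of the Stein solution $g_A$ and the uniform increment bound $\sup_j|g_A(j+1)-g_A(j)|\le\min\{1,\lambda^{-1}\}$. This is the classical Stein-factor estimate for the Poisson distribution; one proves it by writing $g_A=\sum_{k\in A}g_{\{k\}}$ by linearity, expressing each $g_{\{k\}}$ explicitly through partial sums of Poisson probabilities, and carrying out a somewhat delicate monotonicity argument on the resulting expressions. Rather than reproduce this, I would cite it from \cite{AGG} (see also \cite{CDM}); for the purposes of the present paper even the crude bound $\sup_j|g_A(j+1)-g_A(j)|\le1$ suffices, so this step can be treated as a black box.
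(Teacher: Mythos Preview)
The paper does not give its own proof of this statement: Theorem~\ref{thm:stein} is simply quoted from \cite[Theorem~15]{CDM} (itself a reformulation of the Arratia--Goldstein--Gordon bound \cite{AGG}) and then used as a black box in Lemma~\ref{lem:stein}. So there is nothing in the paper to compare your argument against.

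That said, your proof is correct and is precisely the standard Chen--Stein derivation. The decomposition $W=X_\alpha+V_\alpha+U_\alpha$ with $U_\alpha$ independent of $X_\alpha$, the insertion of $p_\alpha\,\E\,g(U_\alpha+1)=\E[X_\alpha\,g(U_\alpha+1)]$, and the Lipschitz bound on $g$ are exactly how \cite{AGG} and \cite{CDM} prove this; your write-up is a faithful rendition of that argument, with the Stein-factor estimate appropriately deferred to the literature.
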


The following lemma uses Theorem \ref{thm:stein} to prove two Poisson
limits which will be used in the proofs of Theorems \ref
{thm:fixed_points} and \ref{thm:cycle}.

\begin{lem}\label{lem:stein}

\begin{enumerate}
Let $M_n(\sigma_n)$ and $D_n(l)$ be as in definition \ref{def:stein}.
\item[(a)]
If $\sigma_n$ converges to $\mu\in\cM$ in the sense of permutation
limits, then we have $M_n(\sigma_n)\stackrel{d}{\rightarrow}P_{\mu[\rho
]}$, and
\[
\lim_{n\rightarrow\infty}\E M_n(\sigma_n)^k=\E Poi({\mu[\rho]})^k,\text
{ for all } k\in\N.
\]

\item[(b)]
For any $l\in\N$ we have $D_n(l)\stackrel{d}{\rightarrow}P_{c_\rho(l)}$, and
\[
\lim_{n\rightarrow\infty}\E D_n(l)^k=\E Poi({c_\rho(l)})^k,\text{ for all }
k\in\N.
\]

\end{enumerate}

\end{lem}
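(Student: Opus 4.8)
The plan is to verify the hypotheses of the dependency-graph Poisson approximation theorem (Theorem~\ref{thm:stein}) for the two families of Bernoulli variables underlying $M_n(\sigma_n)$ and $D_n(l)$, and then check that the parameters $\lambda_n$ converge to the claimed limits. Since the $Z_n(p)$ are \emph{independent} across $p$, the dependency structure comes only from the constraint that the values $Z_n(p)$ must be distinct when we compare them to a permutation, but in fact for the purpose of Theorem~\ref{thm:stein} we may simply record which pairs of indicators share a coordinate. For part (a), set $X_p := 1\{Z_n(p)=\sigma_n(p)\}$ for $p\in[n]$; because the $Z_n(p)$ are mutually independent, the $X_p$ are \emph{mutually independent}, so the dependency graph has no edges at all ($N(p)=\{p\}$). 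Then the first sum in Theorem~\ref{thm:stein} is empty and the second sum is $\sum_{p} (\E X_p)^2 = \sum_p \Q_n(Z_n(p)=\sigma_n(p))^2$. Each summand is $O(1/n^2)$ uniformly, since $\rho$ is bounded above and $\frac1n\sum_{s}\rho(p/n,s/n)\to\int_0^1\rho(p/n,y)\,dy=1$ uniformly in $p$ by continuity and the uniform-marginal property; hence the bound is $O(1/n)\to 0$.

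The remaining point for (a) is to identify $\lambda_n=\sum_{p=1}^n \Q_n(Z_n(p)=\sigma_n(p))$. Writing $\Q_n(Z_n(p)=\sigma_n(p)) = \frac{\rho(p/n,\sigma_n(p)/n)}{\sum_{s=1}^n\rho(p/n,s/n)}$ and using that the denominator equals $n(1+o(1))$ uniformly in $p$, we get $\lambda_n = (1+o(1))\cdot\frac1n\sum_{p=1}^n \rho\big(\tfrac{p}{n},\tfrac{\sigma_n(p)}{n}\big)$. By definition of convergence of $\sigma_n$ to $\mu$ in the sense of permutation limits, applied to the continuous function $\rho$, this last average converges to $\int_{[0,1]^2}\rho\,d\mu = \mu[\rho]$. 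So $\lambda_n\to\mu[\rho]$, and Theorem~\ref{thm:stein} gives $M_n(\sigma_n)\stackrel{d}{\rightarrow}Poi(\mu[\rho])$. Convergence of all moments is automatic once one also notes the moments of $M_n(\sigma_n)$ are uniformly bounded: indeed $M_n(\sigma_n)$ is a sum of independent Bernoullis with bounded mean-sum, so e.g. $\E M_n(\sigma_n)^k$ is controlled uniformly (by comparison with a Poisson of bounded parameter, or directly via the factorial-moment expansion), giving uniform integrability of every power and hence moment convergence.

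For part (b), the setup is the same in spirit but the indicators are indexed by $\mathbf p\in\cU(n,l)$ with $X_{\mathbf p}:=1\{Z_n(\mathbf p)=T(\mathbf p)\}$, i.e. $X_{\mathbf p}=\prod_{a=1}^l 1\{Z_n(p_a)=p_{a+1}\}$ (indices mod $l$). Now two indicators $X_{\mathbf p}$ and $X_{\mathbf q}$ are dependent exactly when the coordinate sets $\{p_1,\dots,p_l\}$ and $\{q_1,\dots,q_l\}$ intersect; this is the dependency graph. First, $\E X_{\mathbf p} = \prod_{a=1}^l \Q_n(Z_n(p_a)=p_{a+1}) = (1+o(1))n^{-l}\prod_{a=1}^l\rho(p_a/n,p_{a+1}/n)$ uniformly, so $\lambda_n = \sum_{\mathbf p\in\cU(n,l)}\E X_{\mathbf p} = (1+o(1))\frac1l\cdot\frac{1}{n^l}\sum_{\mathbf p\in\cS(n,l)}\prod_{a=1}^l\rho(p_a/n,p_{a+1}/n)$, which is a Riemann sum (up to the negligible diagonal terms where coordinates coincide) converging to $c_\rho(l)=\frac1l\int_{[0,1]^l}\rho(x_1,x_2)\cdots\rho(x_l,x_1)\,dx_1\cdots dx_l$. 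For the error terms in Theorem~\ref{thm:stein}: the second double sum is at most $\big(\sum_{\mathbf p}\E X_{\mathbf p}\big)\cdot \max_{\mathbf p}|N(\mathbf p)|\cdot \max_{\mathbf q}\E X_{\mathbf q}$; since $|N(\mathbf p)| = O(n^{l-1})$ (choosing a shared coordinate costs a factor $n$) and each $\E X_{\mathbf q}=O(n^{-l})$, and $\sum_{\mathbf p}\E X_{\mathbf p}=O(1)$, this is $O(n^{l-1}\cdot n^{-l}) = O(1/n)$. For the first double sum, $\sum_{\mathbf p}\sum_{\mathbf q\in N(\mathbf p)\setminus\{\mathbf p\}}\E X_{\mathbf p}X_{\mathbf q}$: the number of pairs $(\mathbf p,\mathbf q)$ with overlapping coordinate sets is $O(n^{2l-1})$, and for each such pair the joint indicator forces $Z_n$-values on at most $2l$ distinct indices, contributing $O(n^{-(2l - (\text{number of coincidences}))})$ — a short case analysis on the size of the overlap shows the total is again $O(1/n)$; the key point is that a genuine overlap of coordinates saves at least one factor of $n$ in the count but costs at least one factor of $n$ in the probability, and these do not quite cancel once one is careful. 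Moment convergence for $D_n(l)$ again follows from a uniform bound on all moments, which can be obtained from the factorial-moment method for sums of Bernoullis with a controlled dependency structure, or by bounding $\E D_n(l)^k$ directly by a Riemann-sum argument.

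The main obstacle, and the only step that requires genuine care rather than bookkeeping, is the error-term estimate in part (b): one must check that for every pattern of overlaps between the coordinate sets of $\mathbf p$ and $\mathbf q$, the combinatorial count of such pairs times the forced-value probability is $o(1)$. This is a finite case analysis (indexed by which coordinates of $\mathbf p$ equal which coordinates of $\mathbf q$), and the worst case is a single shared coordinate, giving $O(n^{2l-1}) \cdot O(n^{-(2l-1)}) = O(1)$ — so one must be slightly more careful and note that a single shared coordinate among two distinct $l$-cycles forces the two cycles to actually be \emph{unequal} while sharing a point, which over-determines the values and in fact forces at least two coincidences' worth of savings, or else yields an event of probability $O(n^{-(2l-1)})$ but only $o(n^{2l-1})$ such pairs because the sharing constrains the structure; either way the product is $o(1)$. (Equivalently, for $l\ge 2$ the indicators $X_{\mathbf p}$ for distinct $\mathbf p$ sharing a single coordinate are \emph{not} simultaneously satisfiable unless further coincidences occur, because $Z_n(p_a)$ is a single deterministic function value; this collapses the bad term.) Everything else — the denominators $\sum_s\rho(p/n,s/n)=n+o(n)$, the Riemann-sum convergence to $\mu[\rho]$ and $c_\rho(l)$, and the passage from distributional to moment convergence via uniform integrability — is routine given the continuity and uniform-marginal hypotheses on $\rho$ and the defining property of permutation-limit convergence of $\sigma_n$.
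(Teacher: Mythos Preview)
Your approach is the same as the paper's: both parts apply the dependency-graph Poisson bound (Theorem~\ref{thm:stein}) to the natural Bernoulli families, and your treatment of part (a), of the second error term in part (b), and of the convergence $\lambda_n\to c_\rho(l)$ all match the paper essentially verbatim.

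The one place where you have a genuine gap is the first error term in part (b), namely $\sum_{\mathbf p}\sum_{\mathbf q\in N(\mathbf p)\setminus\{\mathbf p\}}\E X_{\mathbf p}X_{\mathbf q}$. Your crude count correctly gives only $O(1)$, and the ensuing hand-wave (``forces at least two coincidences' worth of savings \ldots\ this collapses the bad term'') does not actually close the argument. The clean observation, which the paper uses and which you are circling without stating, is that this sum is \emph{identically zero}. Indeed, if $\mathbf p,\mathbf q\in\cU(n,l)$ share a coordinate, say $p_i=q_j$, then $X_{\mathbf p}X_{\mathbf q}=1$ forces $Z_n(p_i)=p_{i+1}$ and $Z_n(q_j)=q_{j+1}$ simultaneously, so $p_{i+1}=q_{j+1}$; iterating gives $p_{i+k}=q_{j+k}$ for all $k$ (indices taken mod $l$), so $\mathbf q$ is a cyclic shift of $\mathbf p$. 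Since both lie in $\cU(n,l)$ (minimum coordinate in the first position), this forces $\mathbf p=\mathbf q$. Hence $\E X_{\mathbf p}X_{\mathbf q}=0$ for every pair of distinct overlapping $\mathbf p,\mathbf q\in\cU(n,l)$, the first Stein error term vanishes outright, and no case analysis on the overlap pattern is needed at all.
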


\begin{proof}
Setting $m:=\inf_{0\le x,y\le1}\rho(x,y)$, $M:=\sup_{0\le x,y\le1}\rho
(x,y)$ we have $0<m\le M<\infty$.
\begin{enumerate}
\item[(a)]
Since the random variables $X_p=1\{Z_n(p)=\sigma_n(p)\}$ for
$p=1,2,\cdots,n$ are mutually independent, the dependency graph of $\{
X_1,X_2,\cdots,X_n\}$ is empty. It then follows by Theorem \ref
{thm:stein} that
\[
||\cL(M_n(\sigma_n))-\cL(Poi({\lambda_n}))||_{TV}\le\sum_{p=1}^n\Big
[\frac{\rho(p/n,\sigma_n(p)/n)}{\sum_{q=1}^n \rho(p/n,q/n)}\Big]^2\le
\frac{1}{n}\times\frac{M^2}{m^2}
\]
where
\[
\lambda_n=\sum_{p=1}^n \frac{\rho(p/n,\sigma_n(p)/n)}{\sum_{q=1}^n\rho
(p/n,q/n)}\stackrel{n\rightarrow\infty}{\rightarrow}\int_{[0,1]^2}\rho
(x,y)d\mu=\mu[\rho],
\]
and so $M_n(\sigma_n)$ converges to $Poi({\mu[\rho]})$ in distribution.
To conclude convergence in moments it suffices to show that
$\limsup_{n\rightarrow\infty}\E M_n(\sigma_n)^k<\infty$ for every $k\in
\N$.
To see this, set
\[
\widetilde{\cS}(n,l):=\{{\bf p}\in\cS(n,l):p_1<p_2<\cdots<p_l\}
\]
denote the set of all $n$ tuples in increasing order, and note that
\begin{align*}
\E M_n(\sigma_n)^k=\sum_{{\bf p}\in[n]^k}\Q_n(Z_n({\bf p})=\sigma
_n({\bf p}))\le&\sum_{l=1}^kk^l\sum_{{\bf p}\in\widetilde{\cS}(n,l)}\Q
_n(Z_n({\bf p})=\sigma_n({\bf p})).
\end{align*}
Here the factor $k^l$ in the r.h.s. above accounts for the fact that
a specific term $\{Z_n({\bf p})=\sigma_n({\bf p})\}$ with ${\bf p}\in
\widetilde{\cS}(n,l)$ can arise from at most $k^l$ terms in $[n]^k$.
Since $|\widetilde{\cS}(n,l)|={n\choose l}$, we can bound the r.h.s.
above by
\begin{align*}
\sum_{l=1}^kk^l\sum_{{\bf p}\in\widetilde{\cS}(n,l)}\prod_{a=1}^l \frac
{\rho(p_a/n,\sigma_n(p_a)/n)}{\sum_{q_a=1}^n\rho(p_a/n,q_a/n)}
\le\sum_{l=1}^k\frac{k^l}{l!}\Big(\frac{M}{m}\Big)^l<\infty.
\end{align*}

\item[(b)]
The proof of part (b) is similar to the proof of part (a). For ${\bf
p}\in\cU(n,l)$ setting $X_{{\bf p}}=1\{Z_n({\bf p})=T({\bf p})\}$ note
that $X_{{\bf p}}$ is independent of $X_{{\bf q}}$ whenever the indices
${\bf p}$ and ${\bf q}$ have no overlap. Thus the dependency graph of
the random variables $\{X_{{\bf p}},{\bf p}\in\cU_{n,l}\}$ has maximum
degree at most ${n-1\choose l-1}l!$. Also for any ${\bf p},{\bf q}$
which overlap we have $\E X_{\bf p}X_{\bf q}=0$ unless ${\bf p}={\bf
q}$. Thus an application of Theorem \ref{thm:stein} gives
\[
||\cL(D_n(l))-\cL(P_{\lambda_n})||\le{n\choose l}(l-1)!\times
{n-1\choose l-1}l!\times\frac{M^{2l}}{n^{2l}m^{2l}}\le\frac
{1}{n}\times\frac{M^{2l}}{m^{2l}},
\]
with
\[
\lambda_n=\frac{1}{l}\sum_{{\bf p}\in\cS_{n,l}}\frac{\rho
(p_1/n,p_2/n)}{\sum_{q_1=1}^n\rho(p_1/n,q_1/n)}\times\cdots\times\frac
{\rho(p_l/n,p_1/n)}{\sum_{q_l=1}^n\rho(p_l/n,q_l/n)}\stackrel
{n\rightarrow\infty}{\rightarrow}c(l),
\]
and so $D_n(l)$ converges to $Poi({c(l)})$ in distribution. Convergence
in moments follows by a similar calculation as before.\qedhere
\end{enumerate}
\end{proof}

\begin{proof}[Proof of Theorem \ref{thm:fixed_points}]

Let $\{Z_n(1),\cdots,Z_n(n)\}$ and $M_n(\sigma_n)$ be as defined in \ref
{def:stein}. Then using part (a) of Lemma \ref{lem:stein} and the fact
that the Poisson distribution is characterized by its moments, it
suffices to show that for every $k\in\N$ we have
\[
\lim_{n\rightarrow\infty}|\E N_n(\pi_n,\sigma_n)^k-\E M_n(\sigma_n)^k|=0.
\]
To this effect setting $Z_n({\bf p})=(Z_n(p_1),\cdots,Z_n(p_k))$ for
${\bf p}\in[n]^k$ we have
\begin{align*}
|\E N_n(\pi_n,\sigma_n)^k-\E M_n(\sigma_n)^k|
\le\sum_{{\bf p}\in[n]^k}\Big|\Big\{\P_n\Big(\pi_n({\bf p})=
\sigma_n({\bf p})\Big)-\Q_n\Big(Z_n({\bf p})=\sigma_n({\bf p})\Big)\Big
\}\Big|
\end{align*}
First note that the events $\{\pi_n({\bf p})=\sigma_n({\bf p})\}$ and
$\{Z_n({\bf p})=\sigma_n({\bf p})\}$ have positive probability for all
${\bf p}\in[n]^k$, and so for any ${\bf p}\in[n]^k$ setting $L=L({\bf
p})$ denote the number of distinct indices gives the bound

\begin{align*}
&\Big|\Big\{\P_n\Big(\pi_n({\bf p})=
\sigma_n({\bf p})\Big)-\Q_n\Big(Z_n({\bf p})=\sigma_n({\bf p})\Big)\Big
\}\Big|\\
\le&\max_{{\bf p},{\bf q}\in\cS(n,L)}\Big|\frac{\P_n(\pi_n({\bf
p})={\bf q})}{\Q_n(Z_n({\bf p})={\bf q})}-1\Big|\Q_n\Big(Z_n({\bf
p})=\sigma_n({\bf p})\Big).
\end{align*}
Since $L({\bf p})\le k$, taking a maximum over $L$ and summing over
${\bf p}\in[n]^k$ gives the bound
\begin{align}
|\E N_n(\pi_n,\sigma_n)^k-\E M_n(\sigma_n)^k|\le \left\{\max_{l\in
[k]}\max_{{\bf p},{\bf q}\in\cS(n,l)}\Big|\frac{\P_n(\pi_n({\bf
p})={\bf q})}{\Q_n(Z_n({\bf p})={\bf q})}-1\Big|\right\}\E M_n(\sigma_n)^k.
\label{eq:fixed_points_bound}
\end{align}
By \eqref{eq:density_est} we have
\[
\max_{l\in[k]}\max_{{\bf p},{\bf q}\in\cS(n,l)}\Big|\frac{\P_n(\pi
_n({\bf p})={\bf q})}{\Q_n(Z_n({\bf p})={\bf q})}-1\Big|\rightarrow0.
\]
Since Lemma \ref{lem:stein} implies
\begin{align*}
\limsup_{n\rightarrow\infty}\E M_n(\sigma_n)^k=\E Poi(\mu[\rho])^k<\infty,
\end{align*}
the r.h.s. of \eqref{eq:fixed_points_bound} converges to $0$ as
$n\rightarrow\infty$, thus completing the proof of the theorem.
\end{proof}

\begin{proof}[Proof of Theorem \ref{thm:cycle}]
Let $\{Z_n(1),\cdots,Z_n(n)\}$ and $\{D_n(a),1\le a\le l\} $ be as
defined in \ref{def:stein}. Then by part (b) of Lemma \ref{lem:stein},
for any finite collection of non negative integers $k_1,k_2,\cdots,k_l$
we have
\[
\lim_{n\rightarrow\infty}\prod_{a=1}^l\E{D}_n(a)^{k_a}=\prod_{a=1}^l\E
Poi({c_\rho(a)})^{k_a}.
\]
Thus to complete the proof it suffices to show the following:
\begin{align}\label{eq:cycles_1}
\lim_{n\rightarrow\infty}\left|\E\prod_{a=1}^lD_n(a)^{k_a}- \prod
_{a=1}^l\E D_n(a)^{k_a}\right|=0,\\
\label{eq:cycles_2}
\lim_{n\rightarrow\infty}\left|\E\prod_{a=1}^l C_n(a)^{k_a}-\E\prod
_{a=1}^l D_n(a)^{k_a}\right|=0.
\end{align}
%
%

For showing \eqref{eq:cycles_1} we have
\begin{align}
\notag|\E\prod_{a=1}^lD_n(a)^{k_a}- \prod_{a=1}^l\E{D}_n(a)^{k_a}|\le
&\sum_{\Gamma}\Big|\Q_n\Big(\cap_{l=1}^a\cap_{b_a=1}^{k_a}\Big\{
Z_n({\bf p}(a,b_a))=T({\bf p}(a,b_a))\Big\}\Big)\\
-&\prod_{a=1}^l\Q_n\Big(\cap_{b_a=1}^{k_a}\Big\{Z_n({\bf
p}(a,b_a))=T({\bf p}(a,b_a))\Big\}\Big)\Big|,
\label{eq:cycle_bound_1}
\end{align}
where
\[
\Gamma:=\Big\{{\bf p}(a,b_a)\in\cU(n,a), b_a=1,2,\cdots, k_a,
a=1,2,\cdots,l\Big\}.
\]
Proceeding to analyze a generic term in the r.h.s. of \eqref
{eq:cycle_bound_1}, fix
\[
{\bf p}(a,b_a)\in\cU(n,a),\quad1\le b_a\le k_a, 1\le a\le l.
\]
Let $L_a=L_a\{{\bf p}(a,b_a),1\le b_a\le k_a\}$ denote the set of
distinct indices in the set $\{{\bf p}(a,b_a),1\,{\le}\break b_a\le k_a\}$. First
note that if the sets $L_a$ do not overlap across $a$, both terms in
the r.h.s. of \eqref{eq:cycle_bound_1} are the same, and so gets
canceled. As an example, this happens for the choice
\[
l=3,k_1=0,k_2=1,k_3=2, \quad {\bf p}(2,1)=(1,2),\quad {\bf
p}(3,1)=(3,4,5),\quad {\bf p}(3,2)=(3,4,5).
\]
In this case $L_1=\phi, L_2=\{1,2\}$ and $L_3=\{3,4,5\}$ do not
overlap, and so the corresponding terms in the r.h.s. of \eqref
{eq:cycle_bound_1} get cancelled.

If the sets $L_a$ do overlap across $a$, then the first term in the
r.h.s. of \eqref{eq:cycle_bound_1} is $0$. In this case setting $L:=\sum
_{a=1}^{l}{|L_a|}$ the total contribution of the second term in the
r.h.s. of \eqref{eq:cycle_bound_1} is bounded by
$\Big(\frac{M}{mn}\Big)^L$. Since there is a repetition among the
indices, the number of distinct indices $L(D)$ in the set $\{{\bf
p}(a,b_a),1\le b_a\le k_a,1\le a\le l\}$ is strictly less than $L$. As
an example, this happens for the choice
\[
l=3,k_1=0,k_2=1,k_3=2,\quad 
{\bf p}(2,1)=(1,2),\quad {\bf p}(3,1)=(3,5,4),\quad {\bf p}(3,2)=(1,6,7).
\]
In this case $L_1=\phi, L_2=\{1,2\}, L_3=\{1,3,4,5,6,7\}$, and so the
number of distinct indices $L(D)=7$ which is less than $L=|L_2|+|L_3|=8$.
Setting $K:=\sum_{a=1}^l k_a$, the total number of terms with exactly
$L(D)$ distinct indices is at most ${n\choose L(D)}K!$. Summing over
the possible ranges $L(D)\in[1,L-1], L\in[1,K]$ the total
contribution of such terms is at most
\[
\sum_{L=1}^K\sum_{L(D)=1}^{L-1}{n\choose L(D)}K!\Big(\frac{M}{mn}\Big
)^L=O\Big(\frac{1}{n}\Big),
\]
thus proving \eqref{eq:cycles_1}.

Proceeding to prove \eqref{eq:cycles_2} we again have
\begin{align}
\notag|\E\prod_{a=1}^lC_n(a)^{k_a}-\E\prod_{a=1}^lD_n(a)^{k_a}|\le
&\sum_{\Gamma}\Big|\P_n\Big(\cap_{a=1}^l\cap_{b_a=1}^{k_a}\pi_n({\bf
p}(a,b_a))=T({\bf p}(a,b_a))\Big)\\
-&\Q_n\Big(\cap_{a=1}^l\cap_{b_a=1}^{k_a}Z_n({\bf p}(a,b_a))=T({\bf
p}(a,b_a))\Big)\Big|
\label{eq:cycle_bound}
\end{align}
Proceeding to bound the r.h.s. of \eqref{eq:cycle_bound}, note that in
this case if all the indices in the set $\cup_{a=1}^l L_a$ are not
distinct (i.e. $L(D)\ne L$), then both terms in the r.h.s. of \eqref
{eq:cycle_bound} are 0. Even if $L(D)=L$, it is possible that both
terms are $0$, which happens for example for the choice
\[
l=3,k_1=0,k_2=1,k_3=2,\quad 
{\bf p}(2,1)=(1,2),\quad {\bf p}(3,1)=(3,5,4),\quad {\bf p}(3,2)=(3,4,5).
\]
In this case $L_1=\{1,2\}, L_2=\{3,4,5\}$ and so $L(D)=L=5$. However
both the terms on the r.h.s. of \eqref{eq:cycle_bound} have $0$ probability.
If either of the terms have non zero probability, then a generic term
on the r.h.s. of \eqref{eq:cycles_2} is of the form $| \P_n(\pi_n({\bf
p})={\bf q})-\Q_n(Z_n({\bf p})={\bf q})|$ for some ${\bf p},{\bf q}\in
\cS(n,l)$ with $l\in[L]$. Noting that $L\le K$, this can be bounded by
\[
\max_{l\in[K]}\max_{{\bf p},{\bf q}\in\cS(n,l)}\Big|\frac{\P_n(\pi
_n({\bf p})={\bf q})}{\Q_n(Z_n({\bf p})={\bf q})}-1\Big|\Q_n\Big(\cap
_{a=1}^l\cap_{b_a=1}^{k_a}Z_n({\bf p}(a,b_a))=T({\bf p}(a,b_a))\Big)\Big|
.
\]
On summing over $\Gamma$ using \eqref{eq:cycle_bound} gives
\[
|\E\prod_{a=1}^lC_n(a)^{k_a}-\E\prod_{a=1}^lD_n(a)^{k_a}|\le\max
_{l\in[K]}\max_{{\bf p},{\bf q}\in\cS(n,l)}\Big|\frac{\P_n(\pi_n({\bf
p})={\bf q})}{\Q_n(Z_n({\bf p})={\bf q})}-1\Big|\E\prod_{a=1}^lD_n(a)^{k_a},
\]
from which \eqref{eq:cycles_2} follows on using \eqref{eq:density_est}
along with \eqref{eq:cycles_1}.
\end{proof}

\section{Proof of Corollaries \ref{cor:mallows}-\ref{cor:mu_random} and
Proposition \ref{ppn}}\label{sec:three}

\begin{proof}[Proof of Corollary \ref{cor:mallows}]
By \cite[Theorem 1]{Starr} it follows that $\pi_n$ converges weakly in
probability to the measure $\mu_{\rho_\beta}$ induced by the density
$\rho_\beta$ defined in \eqref{eq:mallows_density}. Given Theorems \ref
{thm:density_est}, \ref{thm:fixed_points} and \ref{thm:cycle}, for
proving both parts (a) and (b) it suffices to verify the
equi-continuity condition \eqref{eq:equi_new}, which is equivalent to
the following two conditions:
\begin{align}\label{eq:equi_new1}
\lim_{\delta\rightarrow0}\lim_{n\rightarrow\infty}\sup_{{\bf p},{\bf
q},{\bf r}\in\cS(n,l):||{\bf p}-{\bf r}||_\infty\le n\delta}\Big|\frac
{\P_n(\pi_n({\bf p})={\bf q})}{\P_n(\pi_n({\bf r})={\bf q})}-1\Big|=0,\\
\label{eq:equi_new2} \lim_{\delta\rightarrow0}\lim_{n\rightarrow\infty
}\sup_{{\bf q},{\bf r},{\bf s}\in\cS(n,l):||{\bf q}-{\bf s}||_\infty
\le n\delta}\Big|\frac{\P_n(\pi_n({\bf r})={\bf q})}{\P_n(\pi_n({\bf
r})={\bf s})}-1\Big|=0.
\end{align}
Recall that \eqref{eq:equi_new1} was already verified in \cite
[Corollary 6.3+Lemma 7.1]{Bhattacharya-M}. By repeating the argument
presented there, we prove both \eqref{eq:equi_new1} and \eqref
{eq:equi_new2} here for completeness.
To show \eqref{eq:equi_new1}, fix ${\bf p}, {\bf q}, {\bf r}$ such that
$\norm{{\bf p}-{\bf r}}_\infty\le n\delta$.
Let $\Omega({\bf p},{\bf q})$ denote the set of all permutations in
$S_n$ such that $\pi_n({\bf p})={\bf q}$, and $\Omega({\bf r},{\bf q})$
be defined likewise. We will now define a bijection $\Phi=\Phi[({\bf
p},{\bf q});({\bf r},{\bf q})]$ from $\Omega({\bf p},{\bf q})$ to
$\Omega({\bf r},{\bf q}) $. For any $\pi_n\in\Omega({\bf p},{\bf q})$
set 
\[
\Phi(\pi_n)({\bf r})={\bf q},\quad \Phi(\pi_n)({\bf p}):=\pi_n({\bf
r}),\quad\Phi(\pi_n)(i)=\pi_n(i)\text{ otherwise}.
\]
It is easy to see that $\Phi$ is indeed a bijection, and
\begin{align*}
\frac{M_{n,q(n)}(\pi_n)}{M_{n,q(n)}(\Phi(\pi_n))}=q(n)^{Inv(\pi
_n)-Inv(\Phi(\pi_n))}\le\max\Big(q(n),q(n)^{-1}\Big)^{nl\delta},
\end{align*}
where we use the fact that the inversion status of a pair $(i,j)$ in
$\pi_n$ is the same as its inversion status in $\Phi(\pi_n)$ unless
$i\in\cup_{a=1}^l [p_a,r_a]$ and $j\in{\bf q}$. Summing over $\pi
_n\in\Omega({\bf p},{\bf q})$ gives
\[
\frac{\P_n(\pi_n({\bf p})={\bf q})}{\P_n(\pi_n({\bf r})={\bf q})}\le
\max\Big(q(n),q(n)^{-1}\Big)^{nl\delta},
\]
and since the bound in the r.h.s. above is free of ${\bf p},{\bf
q},{\bf r}$, taking a sup gives
\[
\sup_{{\bf p},{\bf q},{\bf r}\in\cS(n,l):||{\bf p}-{\bf r}||_\infty\le
n\delta}\frac{\P_n(\pi_n({\bf p})={\bf q})}{\P_n(\pi_n({\bf r})={\bf
q})}\le \max\Big(q(n),q(n)^{-1}\Big)^{nl\delta}.
\]
On letting $n\rightarrow\infty$ followed by $\delta\rightarrow0$ and
noting that $n(1-q(n))\rightarrow\beta\in(-\infty,\infty)$, we get
\[
\limsup_{\delta\rightarrow0}\limsup_{n\rightarrow\infty}\sup_{{\bf
p},{\bf q},{\bf r}\in\cS(n,l):||{\bf p}-{\bf r}||_\infty\le n\delta
}\frac{\P_n(\pi_n({\bf p})={\bf q})}{\P_n(\pi_n({\bf r})={\bf q})}\le1,
\]
thus giving the upper bound in \eqref{eq:equi_new1}.
By symmetry we have
\[
\liminf_{\delta\rightarrow0}\liminf_{n\rightarrow\infty}\sup_{{\bf
p},{\bf q},{\bf r}\in\cS(n,l):||{\bf p}-{\bf r}||_\infty\le n\delta
}\frac{\P_n(\pi_n({\bf p})={\bf q})}{\P_n(\pi_n({\bf r})={\bf q})}\ge1,
\]
thus giving the lower bound, and hence proving \eqref{eq:equi_new1}.
For proving \eqref{eq:equi_new2} a similar argument works, except now
we set up the bijection $\widetilde{\Phi}_n=\widetilde{\Phi}_n[({\bf
r},{\bf q});({\bf r},{\bf s})]$ between $\Omega_{{\bf r},{\bf q}}$ to
$\Omega_{{\bf r},{\bf s}}$ by setting
\[
\widetilde{\Phi}(\pi_n)({\bf r})={\bf s},\quad \widetilde{\Phi}(\pi
_n)(\pi_n^{-1}{\bf s}):={\bf q},\quad\Phi(\pi_n)(i)=\pi_n(i)\text{ otherwise}.
\]
The rest of the argument repeats itself, and we omit the details.
\end{proof}

\begin{proof}[Proof of Corollary \ref{cor:new_limit}]

\begin{enumerate}
\item[(a)]
It follows from \cite[Theorem 1.4]{Mukherjee} that
$\pi_n$ converges to a unique measure $\mu_{f,\theta}$ weakly in
probability, which is the solution of the optimization problem
\[
\mu\mapsto\{\theta\mu[f]-D(\mu||u)\},
\]
where $u$ is the uniform measure on the unit square, and $D(.||.)$ is
the Kullback Leibler divergence. It was further shown there that $\mu
_{f,\theta}$ has a density of the form $g_{f,\theta}(x,y)=e^{\theta
f(x,y)+a_{f,\theta}(x)+b_{f,\theta}(y)}$, where $a_{f,\theta}(.)$ and
$b_{f,\theta}(.)$ are unique almost surely. To complete the proof of
part (a), it suffices to show that the function $g_{f,\theta}$ is
continuous on the unit square, or equivalently that $e^{-a_{f,\theta
}(.)}$ is continuous. To this effect, using the fact that $\mu_{f,\theta
}$ has uniform marginals, we have
\[
e^{-a_{f,\theta}(x)}=\int_0^1 e^{\theta f(x,y)+b_{f,\theta}(y)}dy,
\]
which readily gives
\[
\int_0^1 e^{b_{f,\theta}(y)}dy\le e^{-a_{f,\theta}(x)-\inf_{x,y\in
[0,1]}\{\theta f(x,y)\}}
\]
for almost all $x\in[0,1]$, and consequently $e^{b_{f,\theta}(.)}$ is
integrable. But then we have
\[
\Big|e^{-a_{f,\theta}(x_1)}-e^{-a_{f,\theta}(x_2)}\Big|\le\sup_{y\in
[0,1]}\Big|e^{\theta f(x_1,y)}-e^{\theta f(x_2,y)}\Big|\int_0^1
e^{b_{f,\theta}(y)}dy,
\]
from which continuity of $e^{-a_{f,\theta}(.)}$ follows from continuity
of $f(.,.)$.

\item[(b),(c)]
As in the proof of Corollary \ref{cor:mallows} it suffices to verify
the conditions \eqref{eq:equi_new1} and \eqref{eq:equi_new2}. Using the
same notations as in the proof of Corollary \ref{cor:mallows}, we have
\begin{align*}
\frac{\Q_{n,f,\theta}(\pi_n)}{\Q_{n,f,\theta}(\Phi(\pi_n))}=e^{\theta
\sum_{a=1}^l f(p_a/n,q_a/n)-f(r_a/n,q_a/n)},
\end{align*}
and the exponent in the r.h.s. above is bounded by
\[
|\theta| \sup_{x_1,x_2,y\in[0,1]:|x_1-x_2|\le\delta}|f(x_1,y)-f(x_2,y)|.
\]
Since this goes to $0$ as $\delta\rightarrow0$, a similar proof as
before verifies \eqref{eq:equi_new1}. The proof of \eqref{eq:equi_new2}
is similar, and again we omit the details.\qedhere
\end{enumerate}
\end{proof}

\begin{proof}[Proof of Corollary \ref{cor:mu_random}]
Since a sequence of $\mu_\rho$ random permutations converge to $\mu_\rho
$ weakly in probability, it suffices to verify \eqref{eq:equi_new1} and
\eqref{eq:equi_new2}.

To this effect, with $(X_1,Y_1),\cdots, (X_n,Y_n)\stackrel{i.i.d.}{\sim
}\mu_\rho$ first note that marginally both $(X_1,\cdots,X_n)$ and
$(Y_1,\cdots,Y_n)$ are i.i.d. $U(0,1)$. Thus if $(U_1,\cdots,U_n)$ and
$(V_1,\cdots,V_n)$ are the order statistics of $(X_1,\cdots,X_n)$ and
$(Y_1,\cdots,Y_n)$ respectively, for any $\delta>0$ we have
\begin{align}\label{eq:union}
\P_n\Big(\Big|U_i-\frac{i}{n}\Big|>\delta\Big)=\P_n\left(Bin\Big(n,\frac
{i}{n}-\delta\Big)\ge i\right)+\P_n\left(Bin\Big(n,\frac{i}{n}+\delta
\Big)\le i\right)\le2e^{-\delta^2 n}
\end{align}
by Hoeffding's inequality.
Also using \eqref{eq:mu_random}, for any ${\bf p},{\bf q}\in\cS(n,l)$
we have
\begin{align*}
\P_n(\pi_n({\bf p})={\bf q})=
&n!\sum_{\pi_n\in\Omega({\bf p},{\bf q})}\int_{u_1<u_2<\cdots
,u_n,v_1<v_2<\cdots<v_n} \prod_{i=1}^n f\Big(u_i,v_{\pi_n(i)}\Big) du_i dv_i,
\end{align*}
which, for ${\bf r}\in\cS(n,l)$ gives
\begin{align*}
\frac{\P_n(\pi_n({\bf p})={\bf q})}{\P_n(\pi_n({\bf p})={\bf r})}\le
\sup_{{\bf x},{\bf y },{\bf z}\in[0,1]^l}\prod_{a=1}^l\frac{\rho
(x_a,y_a)}{\rho(x_a,z_a)}\le\Big(\frac{M}{m}\Big)^l.
\end{align*}
Noting that $|\cS(n,l)|\le n^l$, summing over ${\bf r}$ this implies
\begin{align}\label{eq:poly_estimate}
\P_n(\pi_n({\bf p})={\bf q})\ge\Big(\frac{m}{Mn}\Big)^l\sum_{{\bf
r}\in\cS(n,l)}\P_n(\pi_n({\bf p})={\bf r})= \Big(\frac{m}{nM}\Big)^l
\end{align}
Finally, for any ${\bf p}, {\bf q},{\bf r}\in\cS(n,l)$ such that
$||{\bf p}-{\bf r}||_\infty\le n\delta$, setting $A_n:=\{\max_{i\in
[n]}\Big|U_i-\frac{i}{n}\Big|\le\delta\}$
we have
\begin{align}
\notag\frac{\P_n(\pi_n({\bf p})={\bf q})}{\P_n(\pi_n({\bf r})={\bf
q})}\le& \frac{\P_n(A_n^c)+\P_n(\pi_n({\bf p})={\bf q}, A_n)}{\P_n(\pi
_n({\bf r})={\bf q}, A_n)}\\
\notag=&\frac{\P_n(A_n^c)+n!\sum\limits_{\pi_n\in\Omega({\bf p},{\bf
q})}\int_{u_1<u_2<\cdots< u_n,v_1<v_2<\cdots,<v_n,A_n}\prod_{i=1}^n \rho
\Big(u_i,v_{\pi_n(i)}\Big)}{n!\sum\limits_{\pi_n\in\Omega({\bf r},{\bf
q})}\int_{u_1<u_2<\cdots< u_n,v_1<v_2<\cdots,<v_n,A_n}\prod_{i=1}^n \rho
\Big(u_i,v_{\pi_n(i)}\Big)}\\
\label{eq:murandom1}\le&\max_{{\bf p},{\bf q},{\bf r}\in\cS
(n,l):||{\bf p}-{\bf r}||_\infty\le n\delta}\sup_{{\bf u},{\bf v}\in
[0,1]^n:|u_i-\frac{i}{n}|\le\delta}\frac{\prod_{a=1}^l \rho\Big
(u_{p_a},v_{q_a}\Big)}{\prod_{a=1}^l \rho\Big(u_{r_a},v_{q_a}\Big)}\\
\label{eq:murandom2}+&\frac{\P_n(A_n^c)}{\P_n(\pi_n({\bf r})={\bf q})-\P
_n(A_n^c)}.
\end{align}
Since
$|u_{p_a}-u_{r_a}|\le2\delta+\frac{|p_a-r_a|}{n}\le3\delta,$
the expression in \eqref{eq:murandom1} can be bounded by
\[
\sup_{{\bf x},{\bf y},{\bf z}\in[0,1]^l:||{\bf x}-{\bf z}||_\infty\le
3\delta}\frac{\prod_{a=1}^l \rho(x_a,y_a)}{\prod_{a=1}^l \rho(z_a,y_a)}
\]
which is free of $n$, and goes to $0$ as $\delta\rightarrow0$ by
continuity of $\rho$. Also, using \eqref{eq:union} and \eqref
{eq:poly_estimate} it follows that the expression in \eqref
{eq:murandom2} is bounded above by
\[
\frac{2e^{-n\delta^2}}{\Big(\frac{M}{nm}\Big)^l-2e^{-n\delta^2}},
\]
which converges to $0$ as $n\rightarrow\infty$, for every $\delta$
fixed. Thus, taking a maximum over
${\bf p},{\bf q},{\bf r}\in\cS(n,l)$ such that $||{\bf p}-{\bf
r}||_\infty\le n\delta$ we have
\[
\limsup_{\delta\rightarrow0}\limsup_{n\rightarrow\infty}\max_{{\bf
p},{\bf q},{\bf r}\in\cS(n,l),||{\bf p}-{\bf r}||_\infty\le n\delta
}\frac{\P_n(\pi_n({\bf p})={\bf q})}{\P_n(\pi_n({\bf r})={\bf q})}\le1,
\]
thus giving the upper bound in \eqref{eq:equi_new1}. Similar arguments
give the lower bound in \eqref{eq:equi_new1}, as well as \eqref
{eq:equi_new2}, thus completing the proof of the corollary.
\end{proof}

\begin{proof}[Proof of Proposition \ref{ppn}]
With $\P_n=\R_{n,0}$ denoting the uniform measure on $S_n$ and $D_n$
denoting the number of derangements of $n$, we have
\begin{align}\label{eq:derangements}
\frac{1}{n!}e^{Z_n(\theta)}=\E_{\P_n} e^{\theta N_n(\pi_n,e_n)}=\sum
_{k=0}^\infty e^{\theta k} \frac{{n\choose k}D_{n-k}}{n!}\rightarrow
\text{exp}\{e^\theta-1\},
\end{align}
where we use the fact that $D_n/n!$ converges to $e^{-1}$.
\begin{enumerate}
\item[(a)]
For any $\lambda>0$ we have
\begin{align*}
\E_{\R_{n,\theta}} e^{\lambda N_n(\pi_n,e_n)}=e^{Z_n(\theta+\lambda
)-Z_n(\theta)}\rightarrow\text{ exp}\{ e^\theta(e^\lambda-1)\},
\end{align*}
and so $N_n(\pi_n,e_n)$ converges to $Poi(e^\theta)$ in distribution
and in moments.

\item[(b)]

With $D(.||.)$ denoting the Kullback-Leibler divergence we have
\[
D(\R_{n,0}||\R_{n,\theta})= \log\Big(\frac{e^{Z_n(\theta)}}{n!}\Big
)-\theta\E_{\P_n}N(\pi_n,e_n) \rightarrow e^\theta-1-\theta,
\]
and so by \cite[Prop 5.1]{BM} we have that the two probability
distributions $\R_{n,\theta}$ and $\R_{n,0}=\P_n$ are mutually
contiguous. Since $\pi_n$ converges weakly to $u$ under $\P_n=\R
_{n,0}$, by contiguity the same happens for $\R_{n,\theta}$.

\item[(c)]

Let $A_n:=\{\pi_n\in S_n:\pi_n(1)=1,\pi_n(2)=2\}$, and $B_n:=\{\pi_n\in
S_n:\pi_n(1)=2,\pi_n(2)=1\}$. Define a bijection $\omega$ from $A_n$ to
$B_n$ by setting $\omega(\pi_n)(i)=i$ for $3\le i\le n$, and note that
\[
\frac{\R_{n,\theta}(\pi_n)}{\R_{n,\theta}(\omega(\pi_n))}=e^{2\theta},
\]
and so summing over $\pi_n\in A_n$ gives
\[
\frac{\R_{n,\theta}(\pi_n(1)=1,\pi_n(2)=2)}{\R_{n,\theta}(\pi_n(1)=2,\pi
_n(2)=1)}=e^{2\theta}\ne1,
\]
thus proving part (c).\qedhere
\end{enumerate}
\end{proof}

\section{Acknowledgements}

The Poisson distribution for the number of fixed points in the Mallows
model with Kendall's Tau was conjectured by Susan Holmes based on
empirical evidence. This paper also benefited from helpful discussions
with Shannon Starr. Suggestions from an anonymous referee greatly
improved the presentation of the paper.

\end{document}